\newenvironment{poliabstract}[1]
  {\begin{abstract}}
  {\end{abstract}}
\newtheorem{theorem}{Theorem}[section]
\newtheorem{lemma}[theorem]{Lemma}
\newtheorem{proposition}[theorem]{Proposition}
\theoremstyle{definition}
\newtheorem{definition}[theorem]{Definition}
\newcommand{\R}{\mathbb R} 
\newcommand{\N}{\mathbb N}  
\newcommand{\E}{\mathbb E}
\newcommand{\Pro}{\mathbb P}
\newcommand{\ve}{\varepsilon}
\newcommand{\txi}{\tilde{\xi}}
\newcommand{\Td}{\mathbb{T}^d}
\newcommand{\Tt}{\mathbb{T}^2}
\newcommand{\al}{\alpha}
\newcommand{\be}{\beta}
\newcommand{\W}{\bar{W}}
\begin{document}

\title{From a kinetic equation to
a  diffusion under an anomalous scaling}
\author[G. Basile]{Giada Basile}
 \address{G. Basile\\
Dipartimento di Matematica\\
Universit\`a di Roma La Sapienza\\
Piazzale Aldo Moro 5\\
00185 Roma, Italy     
} \email{basile@mat.uniroma1.it}

\subjclass[2000]{82C44,60tK35,60G70} 

\keywords{Anomalous thermal conductivity,
 kinetic limit, invariance principle}

\date{\today}

\thanks{This research was supported in part  through the German Research
Council in the SFB 611}

\maketitle

\begin{poliabstract}{Abstract}
A linear Boltzmann equation is interpreted as
the forward equation for the probability density of a Markov process
$(K(t), i(t), Y(t))$ on $(\Tt\times\{1,2\}\times \R^2)$, where $\Tt$
is the two-dimensional torus. Here $(K(t), i(t))$ is an autonomous
reversible jump process, with waiting times between two jumps with
finite expectation value but infinite variance.  $Y(t)$ is an
additive functional of $K$, defined as $\int_0^t v(K(s))ds$, where
$|v|\sim 1$ for small $k$ .  We prove that the rescaled process
$(N\ln N)^{-1/2}Y(Nt)$ converges in distribution to a two-dimensional
Brownian motion.
As a consequence, the appropriately rescaled
solution of the Boltzmann equation converges to the solution of a diffusion equation.
\end{poliabstract}

\begin{poliabstract}{R\'esum\'e}
Une \'equation de Boltzmann lin\'eaire est interpr\'et\'ee comme
\'equation  de Fokker-Planck associ\'ee \`a la densit\'e de probabilit\'e
d'un processus de Markov $(K(t), i(t), Y(t))$ sur $(\Tt\times\{1,2\}\times
\R^2)$,
o\`u $\Tt$ est le tore bidimensionnel. Le processus Markovien $(K(t), i(t))$
est ici un processus de sauts r\'eversible avec des temps
d'attente entre deux sauts \`a moyenne finie mais variance infinie.
$Y(t)$ est une fonctionnelle additive de $K$, d\'efinie par $Y(t) = \int_0^t
v(K(s))ds$,
o\`u $|v|\sim 1$ pour $k$ petit.
Nous prouvons que le processus  $(N\ln N)^{-1/2}Y(Nt)$
converge en distribution vers un mouvement brownien bidimensionnel.
En cons\'equence, et moyennant un changement d'\'echelle appropri\'e, la
solution  de l'\'equation de Boltzmann converge vers celle d' une \'equation de
diffusion.

\end{poliabstract}

\section{Introduction}

One of the most interesting aspects  of the problem of energy transport in a solid
is an
anomalous  thermal conduction observed in low dimensional
materials (see \cite{LLP}, \cite{Dh} for a general review; see also
\cite{GBN} for experimental data for graphene materials). So far 
very few results are obtained by a rigorous analysis of microscopic
dynamics, and even crucial points, such as the exponent of the  divergence of
thermal conductivity in dimension one, are still debated.

 The theoretical approach proposed by
Peierls 
\cite{Pei} 
intended  to compute
thermal conductivity in analogy with the kinetic theory of gases, 
%
%
conforming to the idea  that 
at low temperatures the lattice vibrations,
responsible of energy transport, can be described as a gas of
interacting particles (phonons).
 The time-dependent distribution function of phonons
 solves a Boltzmann type equation, and 
 an explicit expression for  the thermal conductivity 
is obtained, which is of the form of the kinetic theory  $\kappa=\int dk C_k v_k^2 \tau_k$. Here $C_k$ is the heat capacity of phonons with wave number $k$, $v_k$ is their velocity  and $\tau_k$ is 
the average time between two collisions. 
A goal of the kinetic approach is the prediction that the mean free path $\lambda_k=v_k \tau_k$ and thus thermal conductivity are infinite  in dimension one  
when the phonon momentum is conserved.

Over the last years, several papers are devoted to achieve   
phononic Boltzmann-type
equations from microscopic dynamics (see \cite{Sp} for main ideas and tools). In
\cite{ALS}, \cite{LS}, \cite{LeS}
\cite{Pe} a kinetic
limit is performed for 
chains of an-harmonic oscillators,
 and in \cite{LSrm} a linear Boltzmann equation is
rigorously derived for the harmonic chain of oscillators with random masses.
In \cite{BOS} the authors consider a
system of harmonic oscillators in $d$ dimensions, perturbed by a weak
conservative stochastic noise. 
The following
linear Boltzmann-type equation is deduced  for the energy density
distribution, over the space $\R^d$,  of the   phonons,
characterized by a vector valued wave-number $k\in\Td$
($d$-dimensional torus)
\begin{equation}\begin{split}\label{Be0}
\partial_t u_\al(t,r,& k) +  v(k)\cdot \nabla u_\al(t,r,k)\\
&=\frac{1}{d-1}\sum_{\be\neq \al}\int_{\Td }\;dk'
R(k,k')[u_\be(t,r,k')-u_\al(t,r,k)],
\end{split}\end{equation}
$\al=1,..,d$, $d\geq 2$. Equation in dimension one is similar, 
except for the mixing of the components.  The kernel $R$ is not negative and symmetric.
Despite the exact expressions of $R$ and  $v$ (the velocity), the crucial features are that  $v$  is finite for small  $k$, i.e. $\vert v\vert\to 1$
as $\vert k\vert \to 0$, while  $R$ behaves like $\vert k\vert^2$ for small $k$, and like $\vert k'\vert^2$  for small $k'$. Na\"ively, it means  that 
phonons with small wave numbers 
travel with finite velocity, but they have low probability to be
scattered, thus  one expects that the their mean free paths 
have a macroscopic length (ballistic transport). This is  in accordance with rigorous results  showing 
that thermal conductivity is infinite in dimension one and two for a system of harmonic oscillators perturbed by a conservative noise (\cite{BOS}, \cite{BBO}). 

A  probabilistic interpretation of  (\ref{Be0}) provides an exact statement of that intuition.
The equation describes the evolution of the
probability density of a  Markov process $\left(K(t),
i(t),Y(t)\right)$ on $(\Td\times\{1,..,d\}\times \R^d)$, where
$(K(t), i(t))$ is a reversible jump process  and $Y(t)$ is a vector-valued 
additive functional of $K$, namely $Y(t)=\int_0^t ds\;
v(K_s)$.  $K$ and $i$ can be interpreted, respectively, as the  wave
number and the ``polarization" of a phonon, while $Y(t)$ denotes
its position. In order to investigate the property of 
the process 
$Y(t)$, one can look at the Markov chain
$\{X_i\}$ on $\Td$ given by the sequence of states visited by
$K(t)$, and at the waiting times $\{\tau(X_i)\}$, where $\tau(X_i)$
is the (random) time that the process spends at the $i$-th visited state. 
The vector-valued function $S_n=\sum_{i=1}^{n}\tau(X_i)v(X_i)$  gives
the value of $Y$ at the time  of the  $n$-th jump $T_n=\sum_{i=1}^n
\tau(X_i)$, then $Y(t)$ is just the piecewise
interpolation of $S_n$ at the random times $T_n$.

The behaviour of the rate $R$ implies that the stationary distribution of the chain 
is of the form $\pi(dk)\sim \vert k\vert^2dk$ for $k$
small,  and since the average of $\tau(k)$ goes like $\vert k \vert^{-2}$ for $k\ll 1$,
 the tail distribution of the random variables
$\{\tau(X_i)v(X_i)\}$ behaves like
\begin{equation}\label{tail}
\pi\left[\vert  \tau(X_i)v(X_i)\vert>\lambda\right]\sim
\frac{1}{\lambda^{1+\frac{d}{2}}}\hspace{1cm}\forall d\geq 1.
\end{equation}
Therefore,  in dimension one and two the variables $\tau(X_i)v(X_i)$ have infinite variance
with respect to the stationary measure. We remark that the variance   has  the same expression of the thermal conductivity obtained in \cite{BOS}.

The one dimensional case is discussed in \cite{BB}, where the authors 
prove that the rescaled process
$N^{-2/3}Y(N\cdot)$ converges in distribution to a symmetric L\'evy
process, stable with index $3/2$. 
Convergence of finite dimensional  marginals has been proven earlier in \cite{JKO}.
Here we consider the other critical case $d=2$. $S_n$  is now a  sum of
variables with tail distribution
$\sim\frac{1}{\lambda^2}$, which means that if they  were
independent, they would be  in the domain of attraction of a multivariate
normal distribution. Looking at the behaviour of the variance
\begin{equation*}
\pi\left[(\tau(X_i)v_\alpha(X_i))^2\mathbf{1}_{\{\vert\tau(X_i)v_\alpha(X_i)\vert\leq
\sqrt \lambda\}} \right]\sim \ln
\lambda,\hspace{1cm}\alpha\in\{1,2\},
\end{equation*}
it turns out that the proper scaling contains an extra factor $(\ln
n)^{1/2}$. The rescaled process $(n\ln n)^{-1/2} S_{nt}$ has a
central part, given by  the sum of truncated variables ${}$
$\tau(X_i)v_\alpha(X_i)\mathbf{1}_{\{\vert\tau(X_i)v_\alpha(X_i)\vert\leq
\sqrt n\}}$, with  finite variance
 and an extremal part that goes to zero in probability, due to
 the extra term $(\ln n)^{-1/2}$. This is a standard argument used
 for  sums of i.i.d. random variables with tail distribution
 (\ref{tail}), introduced for the first time by Kolmogorov and
 Gnedenko in \cite{GK}, that we  adapt  to the case of
 dependent variables.

Then we are reduced to the problem of convergence of  a sum of centered,
dependent, bounded random variables  to a  Wiener process. 
We propose two different approaches. In Section \ref{sec:proofProp},
we will use 
an abstract theorem due to  Durrett and Resnick \cite{DR}, based
on the invariance
principle for martingale difference arrays with bounded variables
(Freedman, \cite{Fr1} and \cite{Fr2}), together with a random
change of time (see, for example, Helland \cite{He} and Billingsley
\cite{Bi}). The underlying central limit theorem for  martingale
difference arrays  can be found in Dvoretzky \cite{Dv1}, \cite{Dv2}
(see also \cite{McL}, \cite{He} and references therein). 
The alternative proof, in Section \ref{sec:conv_mom},
is based on 
the convergence of 
the moments to the moments of a Brownian motion, under some asymptotic factorization conditions, and it uses  combinatorial techniques.
In this case we will only show   convergence of the finite dimensional  marginals.
%
The multidimensional generalization is based a Cram\'er-Wold argument
(see for example \cite{Bi}, \cite{Aa}, \cite{Se}, \cite{He}).
%

Convergence of $(n\ln
n)^{-1/2}S_{n\cdot}$ to a two-dimensional Wiener process 
is
in the Skorokhod $J_1$-topology. 
Moreover, since  the  random times $T_n$
are sums of positive variables with finite
expectation, one can prove, using  the  arguments in  \cite{BB},
 that 
$(n\ln n)^{-1/2}Y(n\cdot)$ converges to a two dimensional Wiener process in the uniform topology.

Finally we show that the properly rescaled solution of the linear Boltzmann equation in dimension 
two converges to diffusion. 
 The proof includes a result on the algebraic $L^2$-convergence rate of the semi-group 
(Section \ref{Sec:alg_conv}). 
 The key point is the derivation of   a  Nash 
type inequality which provides an estimate for convergence rates slower than exponential  
(\cite{Li}, \cite{BZ}, \cite{RW}).
The diffusion coefficient is given by an infrared regularization of the  
thermal conductivity obtained in \cite{BBO}, \cite{BOS}, with a proper renormalization (\ref{def:c}).

Convergence of  solutions of  linear kinetic  equations to a diffusion
under an anomalous scaling was also proved by Mellet et al \cite{MMM}, using an analytical approach. We remark 
 that they assume a  collision frequency   strictly positive, while in our case  it is zero 
in $k=0$.

The case $d\geq 3$ can be easily treated with the same strategy. In particular the rescaled solution of the Boltzmann equation converges to a diffusion equation, with a  diffusion coefficient given by the 
thermal conductivity obtained in \cite{BBO}, \cite{BOS}.

\noindent\textbf{Acknowledgements.} I wish to thank Anton Bovier and 
Nicola Kistler for 
valuable discussions.
%
Special thanks are due to the anonymous referee for 
 important remarks that essentially contributed to the final version of the manuscript. 
In particular, she/he
pointed out an incorrect step in the earlier proof of Theorem \ref{theo:conv3} and 
suggested  the argument leading to the inequality \eqref{sg} in Lemma \ref{lemma:alg_conv}.


\section{The model}
We consider equation (\ref{Be0}) in dimension two, namely
\begin{equation}\begin{split}\label{Be}
\partial_t u_\al(t,r,& k) +  v(k)\cdot \nabla u_\al(t,r,k)\\
&=\sum_{\be\neq \al}\int_{\Td }\;dk'
R(k,k')[u_\be(t,r,k')-u_\al(t,r,k)],
\end{split}\end{equation}
$\forall \alpha=1,2$, $t\geq 0$, $x\in\R^2$, $k\in\Tt$,
 with a (vector valued) velocity $v$ and a scattering kernel 
$R$ given by:
\begin{eqnarray}\label{def:v}
v_\al(k) & = &\frac{\sin(\pi k_\al)\cos(\pi
k_\al)}{\left(\sum_{\be=1}^2 \sin^2(\pi
k_\be)\right)^{1/2}},\hspace{1cm}\forall k\in\Tt,\;\forall \al\in\{1,2\}\\
\label{def:R} R(k,k') & = &16\sum_{\al =1}^2 \sin^2(\pi
k_\al)\sin^2(\pi k'_\al),\hspace{1cm}\forall k, k'\in \Tt.
\end{eqnarray}
We denote with  $(K(t), i(t))$ the jump process with values in
$\Tt\times\{1,2\}$, defined by the generator
\begin{equation}
 \label{def:generator}
\mathcal L f (\alpha, k)=\sum_{\beta\neq \alpha}\int_{\Tt}dk'\; R(k,k')\left[f(\beta, k')-f( \alpha, k) \right],
\end{equation}
with $f:\{1,2\}\times \Tt\to\R$ continuous on  $\Tt$. The process
 waits in the state $(k,i)$ an
exponential random time $\tau$ with parameter $\Phi(k,i)$
\begin{equation}\label{def:Phi}\begin{split}
\Phi(k,i)
=\sum_{j=1}^2
(1-\delta_{i,j})\int_{\Td}dk'\; R(k,k')
=8 \displaystyle\sum_{\al=1}^2\sin^2(\pi k_\al),
\end{split}\end{equation}
then it jumps to another state $(j,k')$ with probability
$
\nu\left[i,k; j,dk' \right]=(1-\delta_{i,j})\; P(k,dk'),
$
where
\begin{equation}\label{def:kerP}
P(k, dk'):=\Phi(k)^{-1}R(k,k')dk'=\frac{2\sum_\al \sin^2(\pi
k_\al)\sin^2(\pi k'_\al)}{\sum_\be \sin^2(\pi k_\be)}dk'.
\end{equation}
Observe that the two processes $K(t)$ and $i(t)$ are independent.
%
Disregarding the time, the stochastic sequence $\{X_n\}_{n\geq 0}$
of states visited by $K(t)$ is a Markov chain with value in $\Tt$,
with probability kernel $P(k, dk')$, which is 
strictly positive. Moreover,  there exists a probability measure 
$\lambda$ on $\Tt$, strictly
  positive on open sets, such that for any $k\in \Tt$ it holds
  $P(k,\cdot)\ge c_0 \lambda (\cdot)$ for some $c_0>0$.
This  implies the Doeblin condition for kernel $P$.  In view of
\cite[Thm.~16.0.2]{MT}, the discrete time Markov chain $\{X_n\}_{n\geq 0}$ is uniform ergodic. 
That is there exists a probability $\pi$ on $\Tt$ such that 
$P^n(k,\cdot)$ converges to $\pi$ in total variation uniformly with
respect to the initial condition $k$. Moreover, $\pi$ is strictly
positive on open sets.
 By direct computation
$\pi(dk)=\frac{1}{8}\Phi(k)dk$.

The process $Y(t)$, with value in $\R^2$, is an additive functional of $K(t)$ 
\begin{equation}\label{def:Y}
Y(t)=Y(0)+\int_0^t ds\; v(K_s)ds.
\end{equation}
We choose $Y(0)=0$.
In order to investigate its properties, we define two functions of the Markov chain $\{ X_n\}_{n\geq 0}$,
the clock, $T_n$, with values in $\R_+$ and the position, $S_n$, with
values in $\R^2$
\begin{equation*}
T_n  = \sum_{\ell=0}^{n-1} e_\ell \;\Phi(X_\ell)^{-1},\; \hspace{0.4cm}
S_n  = \sum_{\ell=0}^{n-1} e_\ell \;v(X_\ell)\;\Phi(X_\ell)^{-1}.
\end{equation*}
Here $\{e_\ell\}_{\ell\geq 0}$ are i.i.d. exponential random
variables with parameter $1$, and we take $S_0=0$. The clock $T_n$ is the time of the
$n$-the jump of the process $K(t)$ and   it is a sum of positive random
variables with finite expectation with respect to the invariant measure, i.e.
$\E_\pi[e_1 \;\Phi(X_1)^{-1}]=1$.  $S_n$ is a two-components vector  which gives the value
of $Y(t)$ at time $T_n$, i.e. $S_n=Y(T_n)$. It is a sum of centered random vectors whose
components show a  tail behavior  given in (\ref{tail}). Moreover,
the covariance matrix of each of these vectors is diagonal.
By denoting with  $T^{-1}$  the right-continuous inverse function of $T_n$,
i.e.
$T^{-1}(t):= \inf\{n:T_n\geq t\}$,
we can represent process $Y(t)$ as follows:
$$Y(t)=  S_{\lfloor T^{-1}(t)-1\rfloor} +
v(X_{\lfloor T^{-1}(t)-1\rfloor })(t-T_{\lfloor T^{-1}(t)-1\rfloor}),
$$
where $\lfloor \cdot\rfloor$ denotes the lower integer part.
In particular,  $Y(t)$  is the (vector valued) function defined by
linear interpolation between its values $S_{n}$ at the random points
$T_n$.

\section{Main results. }
For every $N\geq 2$, $t\geq 0$, we define the rescaled processes
\begin{eqnarray}\label{def:TN}
T_N(t) &= &\frac{1}{N}T_{\lfloor N t \rfloor},\hspace{0.4cm}
 T_N^{-1}(t)=\frac{1}{N}T^{-1}(N t),\\ \label{def:ZN}
 Z_N(t) & = & \textstyle{\frac{1}{\sqrt{N\ln N}}}S_{\lfloor N t
\rfloor}+ \left(N t - \lfloor N t \rfloor \right)
\textstyle{\frac{1}{\sqrt{N\ln N}}}v\left(X_{\lfloor N t \rfloor
-1} \right).
\end{eqnarray}
%
 Observe that $Z_N$ is a two-dimensional  continuous vector defined by linear
interpolation between its values $\frac{1}{\sqrt{N\ln N}}S_n$ at the
points $n/N$.

 We assume that the initial
distribution $\mu$ of the process $K_t$ is not concentrated in $k=0$, namely
$\forall \ve>0$ exists $\delta$ such that
\begin{equation}\label{ass:mu}
\mu\big[|k|<\delta\big]<\ve.
\end{equation}
This includes all the absolutely continuous measures w.r.t. Lebesgue measure and 
delta distributions
$\delta_{k_0}(dk)$, with $k_0\in\Tt /\{0\}$.

 Let us denote with
\begin{equation}\label{def:c}
\sigma^2:=\lim_{N\to\infty}\frac{1}{\ln N}\;\E_\pi\left[\left| \frac{e_1
 v_1 (X_1)}{\Phi(X_1)}\right|^2\mathbf{1}_{\left\{\left| \frac{e_1
 v_1 (X_1)}{\Phi(X_1)}\right| \leq \sqrt N\right\}}\right].
\end{equation}
We remark that this limit exists and  one can prove by direct computation that it is equal 
to $\frac{1}{64}\frac{1}{2\pi}$. 
By symmetry, in this
definition we can   replace $v_1(X_1)$ with $v_2(X_1)$ . We use the notation $\bar{W}_\sigma$ for the  vector valued
process  $\bar W _\sigma=(W_\sigma^1, W^2_\sigma)$, where $W_\sigma^1$ and $W_\sigma^2$ are
independent Wiener processes with marginal distribution $W^\al_\sigma(t)-W^\al_\sigma(s)\sim
\mathcal N (0, \sigma^2(t-s))$ $\forall 0\leq s <t$, $\forall \al=1,2$.

\begin{theorem}\label{theo:conv}
  Let $Z_N$ be the process defined in $(\ref{def:ZN})$. Then for
  any $0<\mathcal{T}<\infty$, $\{Z_N(t)\}_{0\leq
    t\leq \mathcal {T}}$ converges to the two-dimensional  Wiener process
   $\{\W _\sigma(t)\}_{0\leq t\leq\mathcal{T}}$.
Convergence is in distribution on the  space of continuous
  functions $C\left([0,\mathcal T], \R^2\right)$  equipped with the uniform 
topology.
\end{theorem}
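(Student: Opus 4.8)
The plan is to establish the functional central limit theorem at the level of the partial sums and then transfer it to the interpolated process $Z_N$. I write $\xi_\ell^\al=e_\ell v_\al(X_\ell)/\Phi(X_\ell)$, so that the $\al$-th component of $S_n$ is $\sum_{\ell<n}\xi_\ell^\al$, and I work with the chain $\{X_\ell\}$ started from $\mu$ together with the independent clocks $\{e_\ell\}$. Since $\W_\sigma$ has independent components, by the Cram\'er--Wold device it suffices to prove that $\theta\cdot Z_N$ converges to a one-dimensional Wiener process of variance $\sigma^2|\theta|^2$ for every $\theta\in\R^2$; the argument below then runs verbatim with $v_\al$ replaced by $\theta\cdot v$. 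The one genuinely two-dimensional point is that the off-diagonal covariance must not survive the logarithmic normalization, i.e. $(\ln N)^{-1}\E_\pi[e_1^2\,v_1(X_1)v_2(X_1)\Phi(X_1)^{-2}\,\1_{\{\cdots\}}]\to0$. This follows from an angular cancellation: near $k=0$ one has $v_\al(k)\sim k_\al/|k|$ and $\Phi(k)\sim 8\pi^2|k|^2$, so with $\pi(dk)\sim|k|^2dk$ the singular part of the integrand is proportional to $k_1k_2/|k|^4$, whose angular mean vanishes, and the cross term stays $O(1)$.

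Next I truncate each increment at the scale $\sqrt N$ of the individual terms and split $Z_N$ into a central and an extremal part, in the spirit of Kolmogorov--Gnedenko. The extremal part is controlled by a first-moment bound, which is insensitive to the dependence: by linearity $\E[\sum_{\ell\le N\mathcal T}|\xi_\ell^\al|\,\1_{\{|\xi_\ell^\al|>\sqrt N\}}]=\sum_\ell\E[\,\cdot\,]$, and since the tail of $\xi_\ell^\al$ is $\sim\lambda^{-2}$ each summand is of order $N^{-1/2}$, so the whole sum is $O(\sqrt N)$ and, after division by $\sqrt{N\ln N}$, is $O((\ln N)^{-1/2})$ uniformly in $t$. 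Here uniform ergodicity lets me replace $\E_\mu$ by $\E_\pi$ up to an exponentially small transient, and assumption (\ref{ass:mu}) guarantees that the single increment carrying the initial state $X_0$ cannot blow up. The interpolation correction in (\ref{def:ZN}) is at most $|v|/\sqrt{N\ln N}\le C(N\ln N)^{-1/2}$, hence also uniformly negligible. It therefore suffices to treat the central process $M_N(t)=(N\ln N)^{-1/2}\sum_{\ell\le Nt}\bar\xi_\ell^\al$ with $\bar\xi_\ell^\al=\xi_\ell^\al\,\1_{\{|\xi_\ell^\al|\le\sqrt N\}}$.

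The key structural observation is that, relative to $\mathcal F_{\ell-1}=\sigma(X_0,e_0,\dots,X_{\ell-1},e_{\ell-1})$, the truncated increments $\bar\xi_\ell^\al$ form a martingale difference array. Indeed, conditionally on $\mathcal F_{\ell-1}$ the state $X_\ell$ has law $P(X_{\ell-1},\cdot)$, and since $v_\al(k')/\Phi(k')$ is odd under $k'_\al\mapsto -k'_\al$ while both the truncation and $P(k,dk')$ are even in $k'_\al$, the conditional mean vanishes identically. The Lindeberg condition is automatic, because $|\bar\xi_\ell^\al|\le\sqrt N=o(\sqrt{N\ln N})$. Hence, by the invariance principle for bounded martingale difference arrays (Freedman, Dvoretzky) together with the Durrett--Resnick theorem and the random time change, $M_N$ converges to $W_\sigma^\al$ as soon as the predictable quadratic variation
\[
\langle M_N\rangle_t=\frac{1}{N\ln N}\sum_{\ell\le Nt}G_N(X_{\ell-1}),\qquad
G_N(k)=\int P(k,dk')\,\E_e\big[(e\,v_\al(k')/\Phi(k'))^2\,\1_{\{|e v_\al(k')/\Phi(k')|\le\sqrt N\}}\big],
\]
converges in probability to $\sigma^2 t$.

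This last convergence is the heart of the proof and the step I expect to be hardest, precisely because $G_N$ depends on $N$ and concentrates ever more sharply at the singularity $k=0$. Its mean is pinned down by invariance, $\pi P=\pi$, giving $\E_\pi[G_N]=\E_\pi[(e_1v_\al(X_1)/\Phi(X_1))^2\1_{\{\cdots\}}]\sim\sigma^2\ln N$, exactly the quantity (\ref{def:c}). For the fluctuation I would invoke reversibility and the spectral gap $\gamma>0$ furnished by uniform ergodicity, in the form $\E_\pi[(\sum_{\ell\le Nt}(G_N(X_\ell)-\E_\pi[G_N]))^2]\le 2\gamma^{-1}Nt\,\E_\pi[G_N^2]$. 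A direct computation with $\pi(dk)\sim|k|^2dk$ and $v/\Phi\sim|k|^{-2}$ shows that the truncation becomes active at $|k|\sim N^{-1/4}$ and yields $\E_\pi[G_N^2]=O(N)$; consequently the variance of $(N\ln N)^{-1}\sum_{\ell\le Nt}G_N(X_\ell)$ is $O((\ln N)^{-2})\to0$, so $\langle M_N\rangle_t\to\sigma^2 t$. The delicate feature is that $\E_\pi[G_N^2]$ sits at the borderline order $N$, so that the logarithmic normalization is exactly what forces the fluctuations to vanish; a rigorous treatment must control this $N$-dependent singular integrand and absorb the non-stationary start through the uniform mixing rate. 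Once $M_N\Rightarrow W_\sigma^\al$ in the Skorokhod $J_1$ topology, continuity of the limit together with the fact that the surviving jumps are of size $O((\ln N)^{-1/2})$ upgrades the convergence to the uniform topology on $C([0,\mathcal T])$; reassembling the components by Cram\'er--Wold gives $Z_N\Rightarrow\W_\sigma$. Alternatively, one may bypass the martingale machinery and prove convergence of the finite-dimensional marginals by the method of moments, verifying that the joint moments of $(N\ln N)^{-1/2}S_{\lfloor N\cdot\rfloor}$ converge to the Gaussian ones through an asymptotic factorization coming from the mixing of $\{X_\ell\}$, at the cost of obtaining the finite-dimensional distributions only.
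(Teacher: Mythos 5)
Your overall route is the paper's route: the Kolmogorov--Gnedenko truncation at level $\sqrt N$, the first-moment bound killing the extremal part (paper's Lemma \ref{lemmaS+}), the parity argument ($v_\al/\Phi$ odd, $P(k,dk')$ and the truncation even) making the truncated increments a martingale difference array, and the Freedman/Durrett--Resnick scheme with random time change, with the crux being $\langle M_N\rangle_t\to\sigma^2 t$ in probability. Your treatment of that crux is a legitimate variant: the paper (Lemma \ref{lemmaV}) runs a direct Chebyshev--second-moment computation, splitting the covariance sums and using uniform ergodicity through $\int|\mu P^{M}(dk)-\pi(dk)|\to0$ together with the pointwise kernel bound (\ref{ubP}), whereas you invoke reversibility plus the $L^2$ spectral gap (which uniform ergodicity does furnish for this reversible chain). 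Two remarks: the bound (\ref{ubP}) actually gives $G_N\leq C\ln N$ pointwise (this is the paper's (\ref{ubf}) after removing the normalization), so your $\E_\pi[G_N^2]=O(N)$ is far from sharp, though sufficient; and you never verify the Durrett--Resnick hypothesis $\Pro[\lim_j V_{N,j}=\infty]=1$, which the paper gets from the lower bound $f_N\geq C_0/N$.

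The genuine gap is the final step, ``reassembling the components by Cram\'er--Wold.'' What you prove is that for each \emph{fixed} $\theta\in\R^2$ the scalar process $\theta\cdot Z_N$ converges to a Brownian motion of variance $\sigma^2|\theta|^2$. Tightness of the vector process does follow componentwise, but to identify the finite-dimensional distributions of an $\R^2$-valued limit you must control linear functionals that mix \emph{different} directions at different times, e.g. $\lambda\cdot Z_N(s)+\mu\cdot\big(Z_N(t)-Z_N(s)\big)$ with $\lambda\neq\mu$: Cram\'er--Wold in $(\R^2)^k$ requires all directions of $\R^{2k}$, while your hypothesis only covers the lower-dimensional cone of ``parallel'' directions $(c_1\theta,\dots,c_k\theta)$, which does not pin down the law of a subsequential limit. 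In particular, your diagnosis that the only two-dimensional issue is the vanishing of the off-diagonal covariance is not correct: asymptotic uncorrelatedness of the components is no substitute for asymptotic \emph{joint} Gaussianity in mixed directions, which is exactly what is missing. This is why the paper, following Sepanski, isolates Lemma \ref{indep}: it applies Dvoretzky's martingale CLT to the direction-switching array $\txi_{N,i}$, equal to $\langle\lambda,\bar\Psi_{N,i}\rangle$ for $i<\lfloor Ns\rfloor$ and to $\langle\mu,\bar\Psi_{N,i}\rangle$ afterwards, yielding $\lambda\cdot Z_N^{<}(s)+\mu\cdot\big(Z_N^{<}(t)-Z_N^{<}(s)\big)\Rightarrow\mathcal N\big(0,\sigma^2\{|\lambda|^2 s+|\mu|^2(t-s)\}\big)$. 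The repair is cheap with the machinery you already set up---verify the conditional-variance condition for these switching arrays, as in the paper---but as written your argument does not establish convergence of $Z_N$ as an $\R^2$-valued process.
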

Then we will prove that $\{T_N^{-1}(t)\}_{t\in [0,\mathcal T]}$
converges in distribution to the function $t$. Combining these
two results, we can show that $Z_N\circ T_N^{-1}$ converges in
distribution to $\bar{W}_\sigma$. Observing that $Z_N\circ T_N^{-1}$ is the process
$$Y_N(t)=\frac{1}{(N\ln
N)^{1/2}}\int_0^{Nt}ds\; v(K_s),$$
 this  implies our main theorem.
%
%
\begin{theorem}\label{theo:conv2}
%
For any
$0<\mathcal{T}<\infty$, $\{Y_N(t)\}_{0\leq \mathcal{T}}$ converges
to the two-dimensional Wiener process $\{\W _\sigma(t)\}_{0\leq
t\leq
    t\leq\mathcal{T}}$.
Convergence is in distribution on the  space of continuous
  functions  $C\left([0,\mathcal T], \R^2\right)$  equipped with the uniform 
topology.
\end{theorem}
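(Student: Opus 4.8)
The plan is to deduce Theorem~\ref{theo:conv2} from Theorem~\ref{theo:conv} by a random time change, exploiting the identity $Y_N=Z_N\circ T_N^{-1}$ recorded above. Since Theorem~\ref{theo:conv} already supplies $Z_N\Rightarrow\W_\sigma$ in the uniform topology, the only genuinely new input is the asymptotics of the clock. Accordingly I would first prove that $T_N^{-1}$ converges to the deterministic identity map $e(t)=t$, and then argue that composing a uniformly convergent sequence of paths with $T_N^{-1}$ preserves convergence in distribution.

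\emph{Step 1: functional law of large numbers for the clock.} Write $T_N(t)=\frac1N T_{\lfloor Nt\rfloor}$ with $T_n=\sum_{\ell=0}^{n-1}e_\ell\,\Phi(X_\ell)^{-1}$. The summands are a functional of the uniformly ergodic chain $\{X_\ell\}$ together with the independent unit exponentials $\{e_\ell\}$, and $\Phi^{-1}\in L^1(\pi)$ with $\E_\pi[e_1\Phi(X_1)^{-1}]=1$. These variables have \emph{infinite} variance under $\pi$, so I would avoid any second-moment estimate and instead use the strong law of large numbers for uniformly ergodic chains, which only requires integrability, to get $\frac1n T_n\to1$ almost surely; uniform ergodicity (furnished by the Doeblin condition) makes this hold uniformly over initial laws $\mu$ obeying~(\ref{ass:mu}). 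Hence $T_N(t)\to t$ in probability for each $t$. Because each $T_N$ is non-decreasing and the limit $e$ is continuous and strictly increasing, a Dini--P\'olya argument upgrades this to uniform convergence on $[0,\mathcal T]$, and inverting the monotone maps yields $T_N^{-1}\to e$ uniformly in probability.

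\emph{Step 2: composition and conclusion.} Convergence in probability of $T_N^{-1}$ to the \emph{deterministic} limit $e$, combined with $Z_N\Rightarrow\W_\sigma$, promotes by a Slutsky-type lemma to joint convergence $(Z_N,T_N^{-1})\Rightarrow(\W_\sigma,e)$. I would then apply the continuous mapping theorem to the composition map $(f,g)\mapsto f\circ g$, which is continuous at every pair $(f,e)$ with $f$ continuous: such $f$ is uniformly continuous on the compact interval, so $\|f\circ g_N-f\circ e\|_\infty\to0$ whenever $\|g_N-e\|_\infty\to0$. This gives $Y_N=Z_N\circ T_N^{-1}\Rightarrow\W_\sigma\circ e=\W_\sigma$ in $C([0,\mathcal T],\R^2)$ with the uniform topology, as claimed. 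One small bookkeeping point is the domain: since $T_N^{-1}(\mathcal T)$ may slightly overshoot $\mathcal T$, I would first extend $Z_N$ to an interval $[0,\mathcal T']$ with $\mathcal T'>\mathcal T$ (legitimate, as Theorem~\ref{theo:conv} holds for every horizon) before composing.

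\emph{Main obstacle.} Granted Theorem~\ref{theo:conv}, Step~2 is essentially soft. I expect the real difficulty to lie in Step~1: establishing the functional law of large numbers when the increments have infinite variance and the chain starts from a non-stationary law $\mu$. The infinite variance forecloses the usual second-moment route, so one must lean on the ergodic theorem for the uniformly ergodic chain and then use monotonicity to pass from pointwise to uniform convergence; it is precisely the uniform ergodicity, obtained from the Doeblin minorization for $P$, that lets one control the clock uniformly in the initial condition and thereby reach the $C([0,\mathcal T],\R^2)$ statement rather than mere convergence of marginals.
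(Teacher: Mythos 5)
Your proposal is correct and takes essentially the same route as the paper: the paper likewise first establishes convergence in probability of $T_N^{-1}$ to the identity map $\chi(t)=t$ on $[0,\mathcal T]$ (citing Lemma 8.1 and Proposition 8.2 of \cite{BB}, whose argument is precisely the ergodic-theorem-plus-monotonicity scheme you sketch in Step 1), and then combines this with Theorem \ref{theo:conv} via Billingsley's Theorem 3.9 (joint convergence when one limit is deterministic) and the random-time-change lemma (Billingsley \cite{Bi}, Lemma p.~151) to conclude $Y_N = Z_N\circ T_N^{-1}\Rightarrow \W_\sigma$. Your Step 1, including the observation that the infinite variance of the clock increments forces an $L^1$/ergodic argument rather than a second-moment bound, is a self-contained rendering of what the paper delegates to \cite{BB}.
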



Finally, we will use the previous result to show that the rescaled solution of the Boltzmann equation converges to a 
diffusion.
We denote with
$u^N$ the two dimensional vector-valued  measure defined as
$$
u^N(t,k,x):= u(Nt, k, (N\ln N)^{1/2}x), \hspace{0.4cm} \forall t\geq 0,\; \forall k\in \Tt,\; \forall x\in\R^2,$$
where $u$ is solution of (\ref{Be}) in $d=2$
with initial condition $u(0, k, x)=u_0(k,(N\ln N)^{-1/2}x )$. 
Given a function $f\in\mathcal S (\R^2\times\Tt)$- the Schwartz space, for any $a\geq 1$ 
we define the norm
$$\|f\|_{\mathcal A_a}=\left(\int_{\R^2\times\Tt}dp\,dk \big|\hat f(p,k) \big|^a \right)^{1/a},$$
where $\hat f$ is the Fourier transform of $f$ in the first variable.
We denote wit
h $\mathcal A_a$ the completion of $\mathcal S$ in the norm $\|\cdot\|_{\mathcal A_a}$.
Observe that $\mathcal A_2= L^2(\R^2\times\Tt).$
\begin{theorem}\label{theo:conv3}
Assume that $u_0\in L^2(\R^2\times\Tt;\,\R^2)\cap \mathcal A_a$, 
with   $a>2$. 
Then, $\forall t\in (0,\mathcal{T}]$, $u^N(t,\cdot,\cdot)$ converges  in 
$ L^2(\R^2\times\Tt;\,\R^2)$ -weak
to $\bar{u}(t,\cdot)$, which solves the following diffusion equation
\begin{equation}\label{eq:diff}
\begin{split}
& \partial_t \bar{u}(t,r)=\frac{1}{2}\sigma^2\; \Delta \bar{u}(t,r)\\
&\bar{u}^\alpha(0,r)=\frac{1}{2}\sum_{\beta=1,2}\int_{\Tt} dk\;u_0^\beta(r,k)
\hspace{1cm} \forall \alpha\in{1,2},\,\forall r\in\R ^2.
\end{split}
\end{equation}
\end{theorem}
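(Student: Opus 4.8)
The plan is to pass to Fourier variables in the spatial variable $r$ and to recognize the rescaled solution as a Feynman--Kac functional of the very process of Theorem \ref{theo:conv2}. Writing $\hat u(t,p,k,\al)$ for the Fourier transform of $u$ in $r$, equation (\ref{Be}) becomes $\partial_t\hat u=(\mathcal L-i\,p\cdot v(k))\hat u$, where $\mathcal L$ is the generator (\ref{def:generator}). Since $R$ is symmetric, one checks that $\mathcal L$ is self-adjoint on $L^2(\Tt\times\{1,2\};dk)$, with invariant measure $m=\,$Lebesgue$\times$(uniform); hence the scattering part of (\ref{Be}) is exactly the generator and the Feynman--Kac formula applies at fixed $p$:
$$
\hat u(t,p,k,\al)=\E_{(k,\al)}\Big[e^{-i\,p\cdot Y(t)}\,\hat u(0,p,K_t,i_t)\Big].
$$
Tracking the two factors $(N\ln N)^{\pm1}$ produced by the dilation $r\mapsto(N\ln N)^{1/2}x$ in $u^N$ and by the rescaled initial datum, the prefactors cancel and one obtains the clean identity
$$
\widehat{u^N}(t,p,k,\al)=\E_{(k,\al)}\Big[e^{-i\,p\cdot Y_N(t)}\,\hat u_0(p,K_{Nt},i_{Nt})\Big],\qquad Y_N(t)=\tfrac{1}{\sqrt{N\ln N}}Y(Nt),
$$
so that $Y_N$ is precisely the process of Theorem \ref{theo:conv2}.

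Two soft facts come next. First, since $-i\,p\cdot v$ is skew-adjoint and $\mathcal L\le0$, the semigroup $e^{t(\mathcal L-ip\cdot v)}$ is an $L^2(dk)$-contraction; by Plancherel this gives $\|u^N(t)\|_{L^2}\le\|u_0\|_{L^2}$, so $\{u^N(t)\}_N$ is bounded in $L^2$, hence weakly precompact. Second, the expected Fourier profile of the limit is $\widehat{\bar u}(t,p,\al)=e^{-\frac12\sigma^2|p|^2t}\,\langle\hat u_0(p)\rangle$, with $\langle\hat u_0(p)\rangle=\frac12\sum_{\be}\int_\Tt\hat u_0(p,k,\be)\,dk$ the $m$-average, which is exactly the Fourier transform of the claimed solution of (\ref{eq:diff}) with the stated initial condition; note it is independent of $(k,\al)$.

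The heart of the proof is the pointwise (in a.e.\ $(p,k,\al)$) limit
$$
\E_{(k,\al)}\Big[e^{-i\,p\cdot Y_N(t)}\hat u_0(p,K_{Nt},i_{Nt})\Big]\ \longrightarrow\ e^{-\frac12\sigma^2|p|^2t}\,\langle\hat u_0(p)\rangle .
$$
Decompose $\hat u_0(p,\cdot)=\langle\hat u_0(p)\rangle+g$ with $g$ of zero $m$-average. For the constant part, Theorem \ref{theo:conv2} gives $\E_{(k,\al)}[e^{-ip\cdot Y_N(t)}]\to\E[e^{-ip\cdot \W_\sigma(t)}]=e^{-\frac12\sigma^2|p|^2t}$ for a.e.\ starting $k$ (those $k\ne0$ satisfy (\ref{ass:mu})). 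For the fluctuation part I must show that the terminal microstate $(K_{Nt},i_{Nt})$ decouples from $Y_N(t)$ and equilibrates, i.e. $\E_{(k,\al)}[e^{-ip\cdot Y_N(t)}g(K_{Nt},i_{Nt})]\to0$. I would apply the Markov property at time $N(t-\delta)$ and split $e^{-ip\cdot(\mathrm{increment})}=1+(e^{-ip\cdot(\mathrm{increment})}-1)$: the ``$1$'' term produces $e^{N\delta\mathcal L}g$, which tends to $0$ in $L^2(m)$ by ergodicity of $\mathcal L$, and whose contribution is controlled because the transition density at the large time $N(t-\delta)$ is bounded (ultracontractivity from the Nash inequality of Section \ref{Sec:alg_conv}); the remainder is estimated by Cauchy--Schwarz using $\|g\|_{L^2(m)}$ together with $\E_{\mathrm{stat}}\big[|e^{-ip\cdot(N\ln N)^{-1/2}(Y(Nt)-Y(N(t-\delta)))}-1|^2\big]\to 2(1-e^{-\frac12\sigma^2\delta|p|^2})$ (again Theorem \ref{theo:conv2}, now from the stationary start). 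Letting $\delta\downarrow0$ kills this bound, proving the claim.

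Finally, a.e.\ convergence of $\widehat{u^N}(t)$ together with the uniform $L^2$ bound yields weak $L^2$ convergence of $\widehat{u^N}(t)$, hence of $u^N(t)$, to $\widehat{\bar u}(t)$; the hypothesis $u_0\in\mathcal A_a$ with $a>2$ supplies the decay of $\hat u_0$ in $p$ needed to make the representation and this passage to the limit rigorous and uniform in the Fourier variable. The main obstacle is the decoupling step: because the rate $\Phi$ vanishes at $k=0$, the generator $\mathcal L$ has no spectral gap, so the equilibration of $(K_{Nt},i_{Nt})$ is only algebraic and must be combined with the anomalous central-limit behaviour of $Y_N$ from Theorem \ref{theo:conv2}, rather than read off from a naive independence.
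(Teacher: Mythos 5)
Your overall architecture coincides with the paper's: the probabilistic representation $\widehat{u^N}(t,p,k,\al)=\E_{(\al,k)}\big[e^{-ip\cdot Y_N(t)}\hat u_0(p,\al_{Nt},K_{Nt})\big]$, the a priori bound $\|u^N(t)\|_{L^2}\le\|u_0\|_{L^2}$ giving weak precompactness, the splitting of $\hat u_0(p,\cdot)$ into its $\tilde\pi$-mean plus a centered part, Theorem \ref{theo:conv2} for the mean part, an equilibration argument for the centered part, and density of $\mathcal S$ at the end. Where you genuinely diverge is the decoupling step. The paper replaces $Y_N(t)$ by $Y_N(t-\theta_N t/N)$ with $\theta_N\to\infty$, $\theta_N/\sqrt{N\ln N}\to 0$: over this mesoscopic window the displacement is \emph{deterministically} $O(\theta_N/\sqrt{N\ln N})$, so no oscillation term appears; conditioning then produces $S_{\theta_N t}\hat{\mathcal U}_p$, and — crucially — the paper stays in weak form, so the $dk$-integration of the starting point against $\hat J$ is (twice) the invariant measure $\tilde\pi$, and Cauchy--Schwarz plus invariance reduce everything to $\|S_{\theta_N t}\hat{\mathcal U}_p\|_{L^2_{\tilde\pi}}$, which is killed by the algebraic rate of Lemma \ref{lemma:alg_conv}; this is exactly where the hypothesis $u_0\in\mathcal A_a$, $a>2$, is consumed. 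You instead fix a macroscopic window $\delta$, must therefore control the oscillation $e^{-ip\cdot\Delta}-1$ via the stationary CLT, use only qualitative ergodicity for the ``$1$'' term, and send $\delta\downarrow 0$ after $N\to\infty$. This order of limits is legitimate and buys a pointwise (in $(p,k,\al)$) statement rather than a weak one, at the price discussed next.

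The price is a genuine gap. Working from a fixed starting point $(k,\al)$, both of your transfers — from $\|S_{N\delta}g\|_{L^2(\tilde\pi)}\to0$ to smallness of $\E_{(k,\al)}\big[|S_{N\delta}g(\text{state at }N(t-\delta))|\big]$, and from the stationary estimate on $\E\big[|e^{-ip\cdot\Delta}-1|^2\big]$ to the point-started process — require that the law of the state at time $N(t-\delta)$ started from $(k,\al)$ be dominated by a constant times $\tilde\pi$. You justify this by ``ultracontractivity from the Nash inequality of Section \ref{Sec:alg_conv}'', and that justification fails: the weak Poincar\'e inequality \eqref{wPin} yields only the algebraic $L^p_{\tilde\pi}\to L^2_{\tilde\pi}$ decay \eqref{alg_conv} for centered $f$, $p>2$; it contains no $L^1\to L^\infty$ smoothing (there is no spectral gap, and the rate degenerates as $p\downarrow 2$). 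Worse, a bounded transition density is literally false here: $\Pro_{(k,\al)}[K_s=k]\ge e^{-\Phi(k)s}>0$, so the law of $K_s$ has an atom at the starting point. What is true, and what your argument actually needs, is that this law is a vanishing atom (mass $e^{-\Phi(k)N(t-\delta)}\to0$ for $k\ne 0$) plus an absolutely continuous part with density bounded by a universal constant; this follows not from the Nash inequality but from a last-jump (Duhamel) decomposition combined with the kernel bound $R(q,k')\le 2\Phi(k')$ — the same mechanism behind \eqref{ubP} — since the density of the non-atomic part is $\int_0^s \E_{(k,\al)}[R(K_u,k')]\,e^{-\Phi(k')(s-u)}du\le 2$. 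With that substitute lemma your proof closes; alternatively one can sidestep the issue entirely, as the paper does, by never disintegrating over the starting point and letting invariance of Lebesgue measure do the work.
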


\section{Sketch of the proof }
We present an outline of the proof of the main theorems. Details are postponed in Section \ref{sec:det}.
\subsection{Theorem \ref{theo:conv}}
Define the two-dimensional
random vector
\begin{equation}\label{def1}
\psi_n:=\Phi(X_n)^{-1}v(X_n), \hspace{1cm}n\in\N_0.
\end{equation}
We will denote with $\psi_n^\al$, $\al=1,2$, the $\al$-component of
$\psi_n$.

We decompose $Z_N$, defined in (\ref{def:ZN}) in two parts, i.e.
$Z_N=Z_N^{
>} +Z_N^{<}$, where $\forall t \geq 0$, $\forall \al=1,2$
\begin{eqnarray*}
Z_N^{\al >}(t)& = &(N\ln N)^{-1/2}\sum_{n=0}^{\lfloor
N t\rfloor -1}e_n \psi_n^\al \mathbf{1}_{\left\{e_n\vert
\psi_n^\al \vert
>\sqrt N\right\}} \\
 &  &+(N\ln
N)^{-1/2}e_{\scriptscriptstyle{\lfloor N t\rfloor}}
 \psi_{\scriptscriptstyle{\lfloor N t\rfloor}}^\al
 \mathbf{1}_{\{e_{\lfloor N t\rfloor}
 \vert \psi_{\lfloor N t\rfloor}^\al \vert
>\sqrt N\}}\left(N t - \lfloor N t \rfloor
\right)\\
 Z_N^{\al<}(t)& = &(N\ln
N)^{-1/2}\sum_{n=0}^{\lfloor N t\rfloor -1}e_n \psi_n^\al
\mathbf{1}_{\{e_n\vert \psi_n^\al
\vert \leq\sqrt N\}}\\
 &   &+(N\ln
N)^{-1/2}e_{\scriptscriptstyle{\lfloor Nt\rfloor}}
 \psi_{\scriptscriptstyle{\lfloor N t\rfloor}}^\al
 \mathbf{1}_{\{e_{\lfloor N t\rfloor}
 \vert \psi_{\lfloor N t\rfloor}^\al \vert
\leq \sqrt N\}}\left(N t - \lfloor N t \rfloor \right).
\end{eqnarray*}
At first  we will show that $Z_N^{>}\stackrel{P}{\to}0 $ when $N\to
\infty$.  It is enough to show that for every unitary vector
$\lambda:=(\lambda_1, \lambda_2)$
$$
\lambda_1 Z_N^{1 >}+\lambda_2 Z_N^{2
>}\stackrel{P}{\to}0,\hspace{0.4cm} N\to\infty.
$$
This is stated in the next Lemma.
\begin{lemma}\label{lemmaS+}
For every $\delta>0$
\begin{equation}
\lim_{N\to\infty}\Pro\left[\sup_{t\in[0,\mathcal{T}]}\left\vert
\lambda_1 Z_N^{1 >}(t)+\lambda_2 Z_N^{2
>}(t)\right\vert >\delta \right]=0,
\end{equation}
$\forall \lambda\in\R^2$ such that $\vert \lambda \vert =1$.
\end{lemma}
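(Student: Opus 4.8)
The plan is to bound the entire extremal part by the \emph{total} absolute mass of the large jumps, a quantity that dominates the supremum uniformly in $t$ and whose expectation can be computed directly. Since $|\lambda_\al|\le 1$, the partial sum up to any $\lfloor N t\rfloor$ together with its interpolating boundary term is bounded in modulus by the sum of the absolute values of all terms up to $\lfloor N\mathcal{T}\rfloor$, so that for every $t\in[0,\mathcal{T}]$
\begin{equation*}
\big|\lambda_1 Z_N^{1>}(t)+\lambda_2 Z_N^{2>}(t)\big|\le (N\ln N)^{-1/2}\sum_{n=0}^{\lfloor N\mathcal{T}\rfloor} e_n\Big(|\psi_n^1|\,\mathbf{1}_{\{e_n|\psi_n^1|>\sqrt N\}}+|\psi_n^2|\,\mathbf{1}_{\{e_n|\psi_n^2|>\sqrt N\}}\Big).
\end{equation*}
The right-hand side no longer depends on $t$, hence it also bounds $\sup_{t\in[0,\mathcal{T}]}$. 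By Markov's inequality it therefore suffices to show that its expectation tends to $0$.

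To estimate the expectation I would condition on the chain $\{X_n\}$. Writing $g_N(a):=\E[e\,|a|\,\mathbf{1}_{\{e|a|>\sqrt N\}}]$ for a unit-rate exponential $e$ and $\psi^\al(k)=\Phi(k)^{-1}v_\al(k)$, independence of the $e_n$ from the chain gives that each summand has expectation $\int_{\Tt} g_N(\psi^\al(k))\,(\mu P^n)(dk)$. The core is the stationary estimate: using the layer-cake identity $\E[Z\mathbf{1}_{\{Z>c\}}]=c\,\Pro[Z>c]+\int_c^\infty \Pro[Z>z]\,dz$ together with the tail $(\ref{tail})$ in the form $\pi[e_1|\psi_1^\al|>\lambda]\sim\lambda^{-2}$, one obtains
\begin{equation*}
\E_\pi\big[e_1|\psi_1^\al|\,\mathbf{1}_{\{e_1|\psi_1^\al|>\sqrt N\}}\big]=\int_{\Tt} g_N(\psi^\al(k))\,\pi(dk)=O(N^{-1/2}).
\end{equation*}

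It remains to transfer this bound from $\pi$ to the actual laws $\mu P^n$, uniformly in $n$. For $n\ge1$ the density of $X_n$ is $\int(\mu P^{n-1})(dk)\,P(k,\cdot)$, and because $R(k,k')$ vanishes like $|k'|^2$ as $k'\to0$ and is bounded (see $(\ref{def:R})$, $(\ref{def:kerP})$), this density is dominated by $C|k'|^2$ near the origin and is bounded away from it, uniformly in $n\ge1$ — exactly the behaviour of $\pi(dk)=\tfrac18\Phi(k)\,dk$. Since $g_N\circ\psi^\al$ is concentrated near $k=0$ (for $|a|$ bounded, $g_N(a)$ is exponentially small), this domination yields $\int g_N(\psi^\al)\,d(\mu P^n)\le C\int g_N(\psi^\al)\,d\pi=O(N^{-1/2})$ for all $n\ge1$, while the single $n=0$ term is controlled by assumption $(\ref{ass:mu})$, which keeps the initial law off the singularity, and contributes only one rescaled summand. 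Summing,
\begin{equation*}
\E\Big[(N\ln N)^{-1/2}\sum_{n=0}^{\lfloor N\mathcal{T}\rfloor} e_n|\psi_n^\al|\,\mathbf{1}_{\{e_n|\psi_n^\al|>\sqrt N\}}\Big]\le (N\ln N)^{-1/2}\,(N\mathcal{T}+1)\,O(N^{-1/2})=O\big((\ln N)^{-1/2}\big)\to0,
\end{equation*}
and likewise for the second component; Markov's inequality then finishes the proof.

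The main obstacle is precisely this transfer step. The functional $g_N\circ\psi^\al$ is unbounded near $k=0$, so the total-variation convergence of $\mu P^n$ to $\pi$ guaranteed by the Doeblin condition of the model section does not by itself suffice. What rescues the argument is that one application of the kernel $P$ forces every $X_n$, $n\ge1$, to inherit the $|k|^2$ vanishing of the transition density at the origin, making the dangerous near-$k=0$ contribution uniformly comparable to the stationary one; establishing this domination uniformly in $n$ is the technical heart of the lemma.
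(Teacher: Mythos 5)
Your proposal is correct and follows essentially the same route as the paper: bound the supremum over $t$ by the total rescaled mass of the large jumps up to index $\lfloor N\mathcal T\rfloor$, apply Markov's inequality, and establish a uniform-in-$n$ estimate $O(N^{-1/2})$ for each term $\E\big[e_n|\psi_n^\al|\mathbf 1_{\{e_n|\psi_n^\al|>\sqrt N\}}\big]$. The ``technical heart'' you identify — the uniform-in-$n$ domination of the law of $X_n$ by $C\,|k'|^2dk'$ near the origin — is precisely the paper's pre-established bound (\ref{ubP}) on $P^m$, which, combined with the boundedness of $|k|^2|\psi^\al(k)|$, gives the same estimate (and note that the $n=0$ term, which neither (\ref{ubP}) nor assumption (\ref{ass:mu}) controls in expectation, is most cleanly handled by observing it is a single fixed a.s.-finite variable divided by $\sqrt{N\ln N}$, hence negligible in probability).
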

\begin{proof}
For every $\lambda\in\R^2$ with $\vert \lambda \vert =1$
\begin{equation*}\begin{split}
& \Pro\left[\sup_{t\in[0,\mathcal{T}]}\left\vert \lambda_1
  Z_N^{1 >}(t)+\lambda_2 Z_N^{2>}(t)\right\vert >\delta
  \right]\\
&\leq \Pro\left[\sup_{t\in[0,\mathcal{T}]}\left\{\left\vert
  Z_N^{1>}(t)\right\vert+ \left\vert Z_N^{2 >}(t)\right\vert\right\} >\delta
  \right]\\
&\leq \sum_{\al=1,2}
  \Pro\left[\sup_{t\in[0,\mathcal{T}]}\left\vert
  Z_N^{ \al >}(t)\right\vert>\frac{\delta}{2}
  \right]\\
\end{split}
\end{equation*}

 For every $t \in[0,\mathcal T]$, $\forall \al=1,2$
\begin{equation*}
\left\vert Z_N^{\al >}(t)\right\vert \leq
  \frac{1}{\sqrt{N\ln N}}
  \sum_{n=0}^{\lfloor N\mathcal T\rfloor -1}e_n\vert \psi_n^\al
  \vert\;
\mathbf{1}_{\left\{e_n\vert \psi_n^\al \vert >\sqrt N\right\}}.
\end{equation*}
Then, by Chebyshev's inequality
\begin{equation*}\begin{split}
&\Pro\left[\sup_{t\in[0,\mathcal{T}]}\left\vert Z_N^{\al
>}(t)\right\vert >\frac{\delta}{2} \right]\\
 & \leq 
 \frac{2}{\delta}\frac{1}{\sqrt{N\ln N}}
 \sum_{n=0}^{\lfloor N\mathcal T\rfloor -1}
 \E\left[e_n\vert \psi_n^\al \vert\;
\mathbf{1}_{\left\{e_n\vert \psi_n^\al \vert >\sqrt N\right\}}
\right]\\
&\leq \frac{2}{\delta}\frac{1}{\sqrt{\ln N}} C_0 \mathcal T,
\end{split}
\end{equation*}
where in the last inequality we used the fact that  $\forall n\geq
0$, $\forall \al=1,2$
\begin{equation*}
\E\left[e_n\vert \psi_n^\al \vert \;\mathbf{1}_{\left\{e_n\vert
\psi_n^\al \vert >\sqrt N\right\}} \right]\leq C_0\frac{1}{\sqrt N},
\end{equation*}
as one can easily compute, using the upper bound for $P^m$
(\ref{ubP}) and the fact that $\vert k\vert^{2} \vert
\psi^\al(k)\vert$ is finite for every $k\in\Tt$, $\forall \al=1,2$.

\end{proof}

Let us consider $Z_N^{<}$. As first step, we will prove that for
every unitary vector $\lambda\in\R^2$,  $\langle
Z_N^{<},\lambda\rangle:= \lambda_1 Z_N^{1 <}+\lambda_2
Z_N^{2<}\Rightarrow W_\sigma$, where $W_\sigma$ is a one dimensional Wiener
process such that $W_\sigma(t)-W_\sigma(s)\sim \mathcal N (0, \sigma^2(t-s))$. This
is stated in the following proposition, the proof is postponed  to the next section.
\begin{proposition}\label{prop:convZ_N}
Fix $\mathcal{T}>0$. Then as $N\to\infty$, for every
$\lambda\in\R^2$, with $\vert \lambda\vert=1$, $\langle
Z_N^{<},\lambda\rangle$ converges weakly to the one dimensional
Wiener process $W_\sigma$. Convergence is in distribution on the space of
continuous functions on $[0,\mathcal T]$ equipped with the uniform
topology.
\end{proposition}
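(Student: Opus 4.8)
The plan is to realize $\langle Z_N^{<},\lambda\rangle$ as the piecewise-linear interpolation of a normalized martingale and to invoke a functional central limit theorem for martingale difference arrays. Fix $\lambda\in\R^2$ with $|\lambda|=1$ and, recalling (\ref{def1}), set
$$\eta_n:=\lambda_1 e_n\psi_n^1\1_{\{e_n|\psi_n^1|\le\sqrt N\}}+\lambda_2 e_n\psi_n^2\1_{\{e_n|\psi_n^2|\le\sqrt N\}}.$$
Up to the last fractional summand, $\langle Z_N^{<},\lambda\rangle(t)$ is the interpolation at the nodes $n/N$ of the sums $(N\ln N)^{-1/2}\sum_{n=0}^{m-1}\eta_n$. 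I would work with the filtration $\mathcal F_n=\sigma(X_0,e_0,\dots,X_n,e_n)$, with respect to which each $\eta_n$ is measurable. The argument then splits into three points: that $\{\eta_n\}$ is an $\mathcal F_n$-martingale difference sequence, that its normalized conditional variance converges to $\sigma^2 t$, and that the conditional Lindeberg condition holds; the first two carry the content, the third being automatic.

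The martingale property is a consequence of the parities encoded in (\ref{def:v})--(\ref{def:R}). Integrating out $e_n$ and then applying the kernel, $\E[\eta_n\mid\mathcal F_{n-1}]=(PG_N)(X_{n-1})$ with $G_N=\lambda_1 G_N^1+\lambda_2 G_N^2$ and $G_N^\al(k)=\psi^\al(k)\,\E\big[e\,\1_{\{e|\psi^\al(k)|\le\sqrt N\}}\big]$, where $\psi^\al(\cdot)=v_\al(\cdot)/\Phi(\cdot)$. Because $v_\al$ is odd in $k_\al$ and $\Phi$ is even, each $G_N^\al$ is odd in $k_\al$; since the factorized kernel $P(k,dk')\propto\sum_\gamma\sin^2(\pi k_\gamma)\sin^2(\pi k'_\gamma)\,dk'$ is even in each $k'_\al$, one gets $PG_N^\al\equiv0$, hence $\E[\eta_n\mid\mathcal F_{n-1}]=0$. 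The same parity (now of $\psi^1\psi^2$, odd in $k_1$) annihilates the mixed term, leaving
$$\E[\eta_n^2\mid\mathcal F_{n-1}]=\lambda_1^2(PQ_N^1)(X_{n-1})+\lambda_2^2(PQ_N^2)(X_{n-1}),\qquad Q_N^\al(k)=(\psi^\al(k))^2\,\E\big[e^2\1_{\{e|\psi^\al(k)|\le\sqrt N\}}\big],$$
consistently with the diagonal covariance structure noted in Section 2.

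The central estimate is $V_N(t):=(N\ln N)^{-1}\sum_{n=0}^{\lfloor Nt\rfloor-1}\E[\eta_n^2\mid\mathcal F_{n-1}]\to\sigma^2 t$ in probability. Since $\pi$ is invariant, $\E_\pi[PQ_N^\al]=\E_\pi[Q_N^\al]$ equals the expectation of $(e_1 v_1(X_1)/\Phi(X_1))^2\1_{\{|e_1 v_1(X_1)/\Phi(X_1)|\le\sqrt N\}}$ with $X_1\sim\pi$, which is precisely the quantity in (\ref{def:c}); hence $\E_\pi[Q_N^\al]/\ln N\to\sigma^2$ for $\al=1,2$, and with $\lambda_1^2+\lambda_2^2=1$ the stationary mean of $V_N(t)$ tends to $\sigma^2 t$. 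Two facts then finish the claim. The chain is reversible with respect to $\pi$---indeed $\pi(dk)P(k,dk')=\frac18 R(k,k')\,dk\,dk'$ is symmetric---and, satisfying Doeblin's condition, has an $L^2(\pi)$ spectral gap independent of $N$; together with $\|Q_N^\al\|_{L^2(\pi)}^2=O(N)$ this gives $\mathrm{Var}\big(\sum_{n<\lfloor Nt\rfloor}(PQ_N^\al)(X_{n-1})\big)=O(N^2)$, so that $\mathrm{Var}(V_N(t))=O((\ln N)^{-2})\to0$. The non-stationary start $\mu$ (which by (\ref{ass:mu}) gives no mass to $k=0$) contributes only a term bounded by $(N\ln N)^{-1}\sum_n C\rho^{\,n}\|F_N\|_\infty=O((\ln N)^{-1})$, using $\|F_N\|_\infty\le N$ and geometric ergodicity, where $F_N=\lambda_1^2 PQ_N^1+\lambda_2^2 PQ_N^2$. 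Thus $V_N(t)\to\sigma^2 t$ pointwise in probability, and monotonicity of $t\mapsto V_N(t)$ against the continuous limit upgrades this to uniform convergence on $[0,\mathcal T]$.

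The conditional Lindeberg condition is free: each summand satisfies $|\eta_n|\le 2|\lambda|\sqrt N$, so the normalized increments are bounded by $2|\lambda|(\ln N)^{-1/2}\to0$ and eventually never exceed any fixed threshold. With the martingale difference property, the convergence $V_N(t)\to\sigma^2 t$, and this negligibility in hand, the invariance principle of Durrett and Resnick \cite{DR} (resting on Freedman \cite{Fr1,Fr2} and Dvoretzky \cite{Dv1,Dv2}) yields convergence of the step process $(N\ln N)^{-1/2}\sum_{n<\lfloor N\cdot\rfloor}\eta_n$ to $W_\sigma$. Uniform negligibility of the increments forces the limit to be continuous; the last fractional summand is $O((\ln N)^{-1/2})$ in the supremum norm, so the piecewise-linear $\langle Z_N^{<},\lambda\rangle$ shares the same limit and converges to $W_\sigma$ in $C([0,\mathcal T])$ with the uniform topology. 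I expect the conditional-variance step to be the genuine obstacle: the functions $Q_N^\al$ are very far from bounded, with $\|Q_N^\al\|_{L^2(\pi)}^2$ growing like $N$, so no crude second-moment bound suffices. It is only the exact matching between the $O(N^2)$ size of $\mathrm{Var}(\sum(PQ_N^\al)(X_{n-1}))$ produced by the $N$-uniform spectral gap and the $(N\ln N)^2$ normalization that makes the fluctuations disappear, and then only at the slow rate $(\ln N)^{-2}$ for the variance; for the same reason the initial-layer term cannot be absorbed by any $N$-uniform sup-norm control of $Q_N^\al$, and one must let the geometric ergodicity outrun the growth of $Q_N^\al$, which is exactly where reversibility and the Doeblin minorization are indispensable.
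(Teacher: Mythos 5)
Your proposal follows the same overall architecture as the paper's proof of Proposition \ref{prop:convZ_N}: the truncated, normalized increments form a martingale difference array with respect to $\mathcal F_{N,m}=\sigma(X_0,e_0,\dots,X_m,e_m)$ (exact conditional centering coming from the parity of $v_\al$ against the even, factorized kernel), the conditional Lindeberg condition is automatic from the deterministic bound $2(\ln N)^{-1/2}$, and the conclusion comes from Durrett--Resnick \cite{DR} plus the remark that the piecewise-linear interpolant differs from the step process by $O((\ln N)^{-1/2})$ uniformly. Where you genuinely diverge is in the proof of the conditional-variance convergence, i.e.\ the paper's Lemma \ref{lemmaV}. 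The paper's workhorse is the uniform bound \eqref{ubP}: because the one-step density is dominated by a multiple of the invariant density, the conditional variance per step satisfies $f_N(k)=\E[\langle\bar\Psi_{N,m},\lambda\rangle^2\mid X_{m-1}=k]\le C_0/N$ \emph{uniformly in} $k$, and the fluctuations of $\langle\lambda,V_{N,\lfloor Nt\rfloor}\rangle$ are then killed by a bare-hands Chebyshev computation, splitting time lags at a scale $M$ and using uniform (Doeblin) ergodicity in total variation, with $M/N\to 0$. You instead keep the unbounded function $Q_N^\al$, invoke reversibility ($\pi(dk)P(k,dk')=\frac18R(k,k')\,dk\,dk'$ is symmetric) together with the Doeblin minorization to get an $L^2(\pi)$ spectral gap, and bound the stationary variance by $O(N^2)$ via $\|Q_N^\al\|^2_{L^2(\pi)}=O(N)$. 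That route is correct and more modular, but it needs reversibility (which the paper's argument never uses) and yields only a logarithmic rate $O((\ln N)^{-2})$, versus the polynomially small bounds the paper's uniform kernel estimate delivers.

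Three points need tightening. First, your closing claim that the initial layer ``cannot be absorbed by any $N$-uniform sup-norm control of $Q_N^\al$'' is incorrect, and in fact the opposite observation is the paper's key trick: by \eqref{ubP} one has $\|PQ_N^\al\|_\infty=O(\ln N)$ (equivalently $f_N\le C_0/N$ after normalization), and with this bound both your variance estimate and your initial-layer estimate become one-liners. Second, for the non-stationary start you only compare means: $|\E_\mu[V_N(t)]-\E_\pi[V_N(t)]|\to 0$ does not by itself transfer convergence in probability from $\Pro_\pi$ to $\Pro_\mu$. This is fixable with exactly the ingredients you have --- couple the $\mu$-chain to a stationary one (Doeblin gives a coupling time $T$ with geometric tail) and bound $|V_N^\mu(t)-V_N^\pi(t)|\le 2T\|F_N\|_\infty/(N\ln N)=O(T/\ln N)\to 0$ in probability --- but as written the logic has a hole. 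Third, the theorem of \cite{DR} that the paper invokes also requires $\Pro[\lim_{j\to\infty}\langle\lambda,V_{N,j}\rangle=\infty]=1$ for each fixed $N$, so that the random time change is defined on all of $[0,\mathcal T]$; the paper verifies this via the lower bound $f_N\ge C_0/N>0$, and you should either do the same or appeal to a formulation of the martingale FCLT (\cite{McL}, \cite{He}) that dispenses with it.
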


Now we have to show that $Z_N^{<}$ converges to $\bar W_\sigma$.
We follow the approach of \cite{Se} (see the proof of Lemma 4). The
tightness of the sequence $\{Z_N^<\}_{N\geq 1}$ follows from the
tightness of the sequence $\{\langle Z_N^{<},\lambda\rangle\}_{N\geq
1}$, for every unitary vector $\lambda$. Thus  we only have to prove the
convergence of the finite dimensional distribution. In particular,
we have to show the following:
\begin{itemize}
\item[(i)] $Z_N^<(t)-Z_N^<(s)\Rightarrow \bar W_\sigma(t)-\bar
W_\sigma(s)$, $\forall\; 0\leq s\leq t\leq \mathcal T$;
\item[(ii)]  $Z_N^<(s)$ and
$\left(Z_N^<(t)-Z_N^<(s)\right)$ are independent, as
$N\to\infty$,\\ $\forall\; 0\leq s\leq t\leq \mathcal T$.
\end{itemize}
In order to verify the first condition, we observe that the
convergence of the process $\langle Z_N^{<}(\cdot),\lambda\rangle$
to $W_\sigma(\cdot)$ implies  that $\left(\langle
Z_N^{<}(s),\lambda\rangle, \langle
Z_N^{<}(t),\lambda\rangle \right)\Rightarrow (W_\sigma(s),
W_\sigma(t))$, for every
$s,t\geq 0$. But $(W_\sigma(s), W_\sigma(t))$ has the same
law of $\left(\langle \bar W_\sigma(s), \lambda\rangle, \langle \bar
W_\sigma(t), \lambda\rangle \right)$, then
\begin{equation*}\langle Z_N^{<}(t),\lambda\rangle-\langle
Z_N^{<}(s),\lambda\rangle\Rightarrow \langle \bar W_\sigma(t),
\lambda\rangle-\langle \bar W_\sigma(s), \lambda\rangle
\end{equation*}
 for
all $\forall\; 0\leq s\leq t\leq \mathcal T$,
$\forall \lambda \in \R^2$ with $\vert \lambda\vert=1$, and this
implies $(i)$.

In order to verify condition $(ii)$ it is sufficient to prove that
$Z_N^{<}(s)$ and $Z_N^{<}(t)-Z_N^{<}(s)$ are
asymptotically jointly Gaussian and uncorrelated. This is stated in
the next Lemma.

\begin{lemma}\label{indep}
  For all $\lambda$,
$\mu\in\R^2$
\begin{equation}
\langle Z_N^{<}(s),\lambda\rangle + \langle
(Z_N^{<}(t)-Z_N^{<}(s)), \mu\rangle   \Rightarrow \mathcal
N\left(0, \sigma^2\{\vert\lambda\vert^2s +
\vert\mu\vert^2(t-s)\}\right),
\end{equation}
$ \forall \;0\leq s< t\leq \mathcal T$.
\end{lemma}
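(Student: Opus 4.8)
The plan is to write the scalar random variable of Lemma \ref{indep} as a single centered triangular array whose ``direction'' switches at the time $s$, and then to run the same central limit theorem that underlies Proposition \ref{prop:convZ_N}. The entire content of the statement is that the limiting variance carries no mixed term $\lambda\cdot\mu$, which is precisely the asymptotic independence of $Z_N^<(s)$ and $Z_N^<(t)-Z_N^<(s)$.

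First I would reduce to the dominant sums. Recalling $\psi_n=\Phi(X_n)^{-1}v(X_n)$ from (\ref{def1}) and setting, for a unit vector $\nu\in\R^2$,
\[
\chi_n(\nu):=\nu_1\psi_n^1\,\1_{\{e_n|\psi_n^1|\le\sqrt N\}}+\nu_2\psi_n^2\,\1_{\{e_n|\psi_n^2|\le\sqrt N\}},
\]
the interpolation summands in (\ref{def:ZN}) have size $\le C/\sqrt{\ln N}$ on the truncation event, hence vanish in probability; up to these,
\[
\langle Z_N^<(s),\lambda\rangle+\langle Z_N^<(t)-Z_N^<(s),\mu\rangle=\frac{1}{\sqrt{N\ln N}}\sum_{n=0}^{\lfloor Nt\rfloor-1}e_n\,\zeta_n,
\]
where $\zeta_n=\chi_n(\lambda)$ for $n<\lfloor Ns\rfloor$ and $\zeta_n=\chi_n(\mu)$ for $\lfloor Ns\rfloor\le n\le\lfloor Nt\rfloor-1$. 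The reflection $k_\al\mapsto-k_\al$ leaves $\Phi$ and $\pi$ invariant while flipping $v_\al$, so the truncated variables are centered under $\pi$; the bias introduced by the initial law $\mu$ decays geometrically in $n$ by uniform ergodicity and is negligible after the $(N\ln N)^{-1/2}$ rescaling, exactly as in the tail estimate of Lemma \ref{lemmaS+}. Thus $\{e_n\zeta_n\}$ may be treated as centered.

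Next I would feed this two-block array into the same invariance principle for martingale difference arrays that underlies Proposition \ref{prop:convZ_N}. The summands are uniformly small, $(N\ln N)^{-1/2}e_n|\zeta_n|\le C/\sqrt{\ln N}\to0$ on the truncation event, so the only quantity to be recomputed is the limiting variance. The diagonal dominates: the same reflection symmetry kills the within-block cross contributions $\lambda_1\lambda_2\,\E_\pi[\cdots\psi^1\psi^2\cdots]$ and $\mu_1\mu_2\,\E_\pi[\cdots\psi^1\psi^2\cdots]$, while the definition (\ref{def:c}) of $\sigma^2$ and the symmetry exchanging $v_1$ and $v_2$ give $\E_\pi[e_n^2\chi_n(\nu)^2]\sim\sigma^2|\nu|^2\ln N$. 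Summing over the two blocks and dividing by $N\ln N$ yields $\sigma^2\{|\lambda|^2 s+|\mu|^2(t-s)\}$, with no $\lambda\cdot\mu$ term, which is the announced variance.

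The main obstacle is the control of the off-diagonal covariances $\E[e_me_n\zeta_m\zeta_n]$, $m\neq n$, and in particular the cross-block ones with $m<\lfloor Ns\rfloor\le n$, which alone could manufacture a spurious $\lambda\cdot\mu$ contribution. Bounding them crudely by $\|\zeta\|_\infty^2\le CN$ is too lossy, since it would make these terms of the same order as the diagonal. The key observation is that the $\ln N$ enhancement of the diagonal comes entirely from rare excursions of the chain to $k\approx0$, where $\Phi\sim|k|^2$ makes $\psi$ large, and that from such a state the kernel $P(k,dk')$ pushes the chain to a typical state distributed like $\pi\sim|k'|^2dk'$ rather than back near the origin. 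Hence these heavy contributions are temporally decorrelated, and the covariances obey $|\mathrm{Cov}(e_m\zeta_m,e_n\zeta_n)|\le C\rho^{|m-n|}$ uniformly in $N$, \emph{without} the $\ln N$ factor, so that $\sum_{m\ne n}|\mathrm{Cov}|=O(N)=o(N\ln N)$. This is exactly where the explicit form of $R$ and the Doeblin/uniform ergodicity of $\{X_n\}$ enter; it simultaneously suppresses the cross-block term and validates the diagonal computation. As a sanity check, conditionally on the chain the two blocks use disjoint clocks $\{e_n\}$ and are therefore independent, so that once the chain coupling across the threshold $\lfloor Ns\rfloor$ is shown to be asymptotically negligible the joint Gaussian limit with diagonal covariance, and hence the asserted independence, becomes transparent.
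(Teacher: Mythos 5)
Your skeleton is the paper's own: Section \ref{sec:proofind} defines exactly your two-block array ($\txi_{N,i}=\langle\lambda,\bar\Psi_{N,i}\rangle$ for $i\leq\lfloor Ns\rfloor-1$, $\txi_{N,i}=\langle\mu,\bar\Psi_{N,i}\rangle$ afterwards) and feeds it into Dvoretzky's CLT for martingale difference arrays, the conditional Lindeberg condition being trivial since the summands are bounded by $2(\ln N)^{-1/2}$. But what you call the ``main obstacle'' is not an obstacle at all, and the mechanism you invoke to remove it is the wrong one. Conditionally on $\mathcal F_{N,n-1}$, the law of $X_n$ is $P(X_{n-1},dk')$, whose density is even in $k'$, while $\psi^\al$ is odd and the truncation indicator is even; hence $\E\left[e_n\zeta_n\mid\mathcal F_{N,n-1}\right]=0$ \emph{exactly}, the two-block array is an exact martingale difference array, and every off-diagonal covariance $\E[e_me_n\zeta_m\zeta_n]$, $m<n$, vanishes identically --- cross-block terms included. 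No mixing estimate, no bound $C\rho^{|m-n|}$, and no decorrelation-of-excursions argument is needed or relevant; your heuristic about the chain being pushed away from $k\approx 0$ is not a proof, and it is superseded by a symmetry that gives the conclusion for free. (You noticed the centering under $\pi$, but the point is the \emph{conditional} centering, which is what makes the martingale machinery applicable in the first place.)

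The genuine work, which your proposal never addresses, is hypothesis (i) of the martingale CLT: the \emph{convergence in probability} of the predictable quadratic variation $\sum_{i=1}^{\lfloor Nt\rfloor}\E\left[\txi_{N,i}^2\mid\mathcal F_{N,i-1}\right]$ to $\sigma^2\{|\lambda|^2s+|\mu|^2(t-s)\}$. You only compute its mean in stationarity (the line $\E_\pi[e_n^2\chi_n(\nu)^2]\sim\sigma^2|\nu|^2\ln N$), which identifies the constant but says nothing about the fluctuations of this random sum. The paper's proof of this step (Lemma \ref{lemmaV}, adapted in Section \ref{sec:proofind}) is a Chebyshev/second-moment computation: one writes $\E\left[\txi_{N,n}^2\mid\mathcal F_{N,n-1}\right]=f_N(X_{n-1})$ with $f_N\leq C_0/N$ by (\ref{ubP}), expands the square, splits the double sum according to the lag $M$, and controls the long-lag terms by the uniform ergodicity of the chain through $\int_{\Tt}\left|\mu P^{M}(dk)-\pi(dk)\right|$. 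This --- and not covariances of the summands --- is where the ergodic properties of $P$ genuinely enter the proof. Finally, your closing ``sanity check'' is circular: conditional independence of the two blocks given the whole chain path is immediate but useless, because the dependence transmitted through the chain is precisely what must be shown to be asymptotically negligible, and that is accomplished by the quadratic-variation estimate, not by the disjointness of the clocks $\{e_n\}$.
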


We postpone the proof in section \ref{sec:proofind}.

\subsection{Proof of Theorem \ref{theo:conv2}}
Converge in probability  of $T_N^{-1}$  to the function $\chi$,
where $\chi(t)=t$, in a compact $[0,\mathcal T],$ is
proved as in \cite{BB}, see Lemma 8.1 and Proposition 8.2. Then 
$$(Z_N, T_N^{-1})\Rightarrow (\bar W _\sigma,
\chi)$$
(Theorem 3.9 in Billingsley \cite{Bi}) and therefore $Z_N \circ T_N^{-1}\Rightarrow \bar W _\sigma\circ
\chi$ (Billingsley \cite{Bi}, Lemma pg. 151).

\subsection{Proof of Theorem \ref{theo:conv3}}
Given a  vector valued, real function $J\in\mathcal S (\R^2; C(\Tt))$,
 we define the Fourier transform 
in the first variable
$$
\hat J(p,k)=\int_{\R^2}du\, e^{-ip\cdot u} J(u,k),\hspace{0.6cm}\forall p\in\R^2,\,k\in\Tt,
$$
and we  introduce the norm on $\mathcal S (\R^2; C(\Tt))$
\begin{equation*}
 \left\| J \right\|^2_{\mathcal B_2}=
\int_{\R^2}dp\;\Big(\sup_{k\in\Tt} \vert \hat J (p,k)\vert\Big) ^2.
\end{equation*}
We use  a probabilistic representation of the solution of the rescaled Boltzmann equation, namely
\begin{equation*}\begin{split}
& \langle J, u^N(t)\rangle\\
&=\sum_{\alpha=1,2}\int_{\R^2 \times \Tt}dp\;dk \;\hat J_\alpha(p,k)^*\E_{(\alpha,k)}\left[\hat u_0(p, \alpha_{(Nt)}, 
 K_{(Nt)})e^{-i p\cdot Y_N(t)}\right],
\end{split}
\end{equation*}
where  $\E_{(\alpha,k)}[\cdot\;]$
is the expectation starting  from the state $(\alpha,k)$,
 and
$\hat F(p, \beta, k):= \hat F_\beta (p,k)$. 
The measure $\tilde \pi$  on $\{1,2\}\times \Tt$, given by $\tilde\pi(\alpha, dk)=\frac{1}{2}dk$, is 
 invariant for the (reversible) process $\{(\alpha(t),K(t)), t\geq 0\}$ on $(\{1,2\}\times \Tt)$.

Let us choose a sequence of real numbers $\{\theta_N\}_{N\geq 1}$ such that
$\theta_N\to \infty$ for $N\uparrow \infty$ and $\frac{\theta_N}{\sqrt{N\ln N}}\to 0$. 
We show that we can replace $Y_N(t)$ with $Y_N(t-\theta_N t/N)$. Fix $R>0$. Then
\begin{equation}\label{i}\begin{split}
 &\Big|\sum_{\alpha=1,2}\int_{\R^2 \times \Tt}dp\;dk \hat J_\alpha(p,k)^*\\
&\;\;\times\E_{(\alpha,k)}\Big[\hat u_0(p, \alpha_{(Nt)}, 
 K_{(Nt)})\big(e^{-i p\cdot Y_N(t)}-e^{-i p\cdot Y_N(t-\frac{\theta_N}{N} t)}\big)\Big] \Big|\\
&\leq 
\int_{\R^2}dp\,\sup_{k\in\Tt}\big|\hat J(p,k)\big|
\mathbf 1_{\{|p|\leq R\}}\\
&\;\;\times\int_{\Tt}dk\, 
\Big|\E_{(\alpha,k)}\Big[\hat u_0(p, \alpha_{(Nt)}, 
 K_{(Nt)})\big(e^{-i p\cdot Y_N(t)}-e^{-i p\cdot Y_N(t-\frac{\theta_N}{N} t)}\big)\Big] \Big|\\
&+2\int_{\R^2}dp\,\sup_{k\in\Tt}\big|\hat J(p,k)\big|
\mathbf 1_{\{|p|> R\}}\int_{\Tt}dk\, 
\E_{(\alpha,k)}\Big[\big|\hat u_0(p, \alpha_{(Nt)}, 
 K_{(Nt)})\big|\Big].
\end{split}\end{equation}
Since 
$$
\left\vert e^{-i p\cdot Y_N(t)}-e^{-i p\cdot Y_N(t-\frac{\theta_N}{N} t)}\right\vert\leq C_0 
\frac{\theta_N}{\sqrt{N\ln N}}\vert p\vert\mathcal{T},
$$
using Cauchy-Schwartz we have that the r.h.s. of \eqref{i} is bounded by
$$
C_0 R \frac{\theta_N}{\sqrt{N\ln N}}\mathcal T\|J\|_{\mathcal B_2}\| u_0\|_{\mathcal A_2}
+C_1 \|J\|_{\mathcal B_2}
\left(\int_{\R^2\times\Tt}dp\,dk\,|\hat u_0|^2\mathbf 1_{\{|p|>R\}}\right)^{1/2}.
$$
We send $N\to\infty$ and then $R\to \infty$. 

Denoting with 
$\hat{\mathcal{U}}_p(\alpha_t, K_t)=\hat u_0(p, \alpha_{t},  K_{t})- \tilde{\pi} [\hat{u}_0](p)$, $\forall p\in\R^2$, $\forall t>0$, we have
\begin{equation*}
 \begin{split}
\E_{(\alpha,k)}\left[\big(\hat u_0(p, \alpha_{(Nt)}, 
 K_{(Nt)})- 
 \tilde{\pi} [\hat{u}_0](p)\big) 
 e^{-i p\cdot Y_N(t-\frac{\theta_N}{N} t)}\right]\\
=  \E_{(\alpha,k)}\left[e^{-i p\cdot Y_N(t-\frac{\theta_N}{N} t)}\; S_{\theta_N t}\;\hat{\mathcal{U}}_p(\alpha_{t-\theta_N t}, K_{t-\theta_N t})\right],
 \end{split}
\end{equation*}
where $\{S_t\}_{t\geq 0}$ is the semigroup associated to 
the generator (\ref{def:generator}).Thus, using Cauchy-Schwartz,
\begin{equation}\label{ii}\begin{split}
 &\Big|\sum_{\alpha=1,2}\int_{\R^2 \times \Tt}dp\;dk \hat J_\alpha(p,k)^*\\
&\;\;\times\
\E_{(\alpha,k)}\left[\big(\hat u_0(p, \alpha_{(Nt)}, 
 K_{(Nt)})- 
\tilde{\pi} [\hat{u}_0](p)\big)
 e^{-i p\cdot Y_N(t-\frac{\theta_N}{N} t)}\right]\Big\vert\\
&\leq 2\;\left\|J\right\|_{\mathcal A_2}\; \left(\int_{\R^2}dp\; {\| S_{\theta_N t}\;\hat{\mathcal{U}}_p\|}^2_{L_{\tilde{\pi}}^2}\right)^{1/2}.
\end{split}
\end{equation}
In order to prove that the last expression converges to zero, we use the following lemma on the $L^2$-convergence.
\begin{lemma}\label{lemma:alg_conv}
For every $f\in L^2_{\tilde\pi}$ with $\tilde\pi\big[ f]=0$ 
the following inequality holds:
\begin{equation}\label{alg_conv}
\|S_t f\|^2_{L_{\tilde{\pi}}^2}\leq C \| f\|^2_{L^{q}_{\tilde{\pi}}}\frac{1}{t^{1-\frac 2 q}}\, ,
\hspace{0.4cm}q>2,
\end{equation}
for every $t\geq 0$.
\end{lemma}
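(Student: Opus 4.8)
The plan is to reduce the semigroup estimate to a functional inequality and then prove that inequality by exploiting the explicit low-rank structure of the kernel $R$ together with the uniform ergodicity already established for the chain $\{X_n\}$.

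First I would set $u(t):=\|S_tf\|^2_{L^2_{\tilde\pi}}$ and use that $\mathcal L$ is self-adjoint in $L^2(\tilde\pi)$ (reversibility) with $\tilde\pi$ invariant. Writing $\mathcal E(g,g):=-\langle g,\mathcal L g\rangle_{\tilde\pi}$ for the associated Dirichlet form, one has $u'(t)=-2\,\mathcal E(S_tf,S_tf)$, while the Markov property gives the $L^p$-contraction $\|S_tf\|_{L^p_{\tilde\pi}}\le\|f\|_{L^p_{\tilde\pi}}$ and preserves $\tilde\pi[S_tf]=\tilde\pi[f]=0$. Hence everything follows from a \emph{weak Poincar\'e inequality}: for every $g\in\mathcal D(\mathcal L)$ with $\tilde\pi[g]=0$ and every $\ve\in(0,1)$,
\[
\|g\|^2_{L^2_{\tilde\pi}}\le \frac{C}{\ve^2}\,\mathcal E(g,g)+C\,\ve^{2(1-2/p)}\,\|g\|^2_{L^p_{\tilde\pi}} .
\]
Inserting $g=S_tf$ and optimizing over $\ve$ in the resulting differential inequality $u\le C\ve^{-2}(-u'/2)+C\ve^{2(1-2/p)}\|f\|^2_{L^p_{\tilde\pi}}$ — equivalently, invoking the abstract criterion of R\"ockner--Wang \cite{RW} — produces exactly the algebraic rate $u(t)\le C\|f\|^2_{L^p_{\tilde\pi}}\,t^{-(1-2/p)}$ of \eqref{alg_conv}.

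To prove the weak Poincar\'e inequality I would split $\Tt$ at scale $\ve$ around the degeneracy $k=0$, where $\Phi(k)\sim|k|^2$ vanishes (write $g_\al:=g(\al,\cdot)$). On the near region $\{|k|\le\ve\}$, whose $\tilde\pi$-measure is $\sim\ve^2$, H\"older's inequality with exponents $p/2$ and $p/(p-2)$ yields a bound $\le C\ve^{2(1-2/p)}\|g\|^2_{L^p_{\tilde\pi}}$, the second term above. On the far region $\{|k|>\ve\}$ one has $\Phi(k)\ge c\,\ve^2$, so the contribution is $\le (c\ve^2)^{-1}\int\Phi\,(g_1^2+g_2^2)\,dk$, and it remains to bound this $\Phi$-weighted $L^2$ norm by $\mathcal E(g,g)$. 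Here I would use the rank-two form $R(k,k')=16\sum_\gamma\sin^2(\pi k_\gamma)\sin^2(\pi k'_\gamma)$, which gives the exact identity $\mathcal E(g,g)=\tfrac12\int\Phi\,(g_1^2+g_2^2)\,dk-16\sum_\gamma c_\gamma(1)c_\gamma(2)$ with $c_\gamma(\al)=\int\sin^2(\pi k_\gamma)\,g_\al\,dk$, together with the Doeblin condition satisfied by $P$: via uniform ergodicity it provides a Poincar\'e inequality for the chain in $L^2(\pi)$, $\pi(dk)\propto\Phi\,dk$, that is $\int\Phi\,\big((g_1-\bar g)^2+(g_2-\bar g)^2\big)\,dk\le C\mathcal E(g,g)$, where $\bar g=\tfrac1{16}\int\Phi\,(g_1+g_2)\,dk$ is the $\pi$-mean.

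The main obstacle is the mismatch between the flat invariant measure $\tilde\pi$, in which the $L^2$, $L^p$ and mean-zero conditions are stated, and the degenerate weighted measure $\pi\propto\Phi\,dk$ in which the chain has a gap. The chain's gap controls only the deviation from the weighted mean $\bar g$, so one is left to show that for flat-mean-zero $g$ the weighted mean itself is controlled, $\bar g^2\le C\,\mathcal E(g,g)$; combined with the identity above this upgrades the far-region estimate to $\int\Phi\,(g_1^2+g_2^2)\,dk\le C\mathcal E(g,g)$ with no surviving residual. This last inequality is the genuinely delicate point: because $\int(g_1+g_2)\,dk=0$ forces any function that is $\Phi$-close to a nonzero constant away from the origin to carry compensating mass near $k=0$, and because $\Phi\sim|k|^2$ together with the two-dimensional volume element makes that compensation cost a positive amount of Dirichlet energy, the flat-mean-zero constraint and the degeneracy together supply the required coercivity. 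This is precisely the quantitative content the soft ergodic argument does not furnish, and it is where the Nash-type inequality must be established by direct computation.
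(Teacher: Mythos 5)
Your overall architecture coincides with the paper's: reduce \eqref{alg_conv} to the weak Poincar\'e inequality \eqref{wPin} through the R\"ockner--Wang/Liggett machinery, then prove \eqref{wPin} by splitting $\Tt$ into a small neighbourhood of $k=0$ (handled by H\"older, producing the $L^p$ term) and its complement, where $\Phi$ is bounded below. Your rank-two identity $\mathcal E(g,g)=\tfrac12\int\Phi\,(g_1^2+g_2^2)\,dk-16\sum_\gamma c_\gamma(1)c_\gamma(2)$ is correct. Where you diverge from the paper is the far region: the paper disposes of the cross term by a one-line kernel-domination claim ($P(k,k')\Phi(k)^{-1}\le c_1<1$, whence $\mathcal E(f,f)\ge\tfrac12(1-c_1)\sum_\alpha\int\Phi f_\alpha^2$), while you try to restore coercivity from the Doeblin gap of the chain plus a control of the $\pi$-mean by the energy; you correctly observe that plain Cauchy--Schwarz gives only $\mathcal E\ge0$ and that the whole difficulty is the mismatch between $\tilde\pi$ and $\pi$.

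However, the inequality your proof rests on --- $\bar g^2\le C\,\mathcal E(g,g)$ for $\tilde\pi$-mean-zero $g$, equivalently the residual-free far-region bound $\int\Phi(g_1^2+g_2^2)\,dk\le C\,\mathcal E(g,g)$ --- is not merely left unproven: it is false, so the ``direct computation'' you defer to cannot exist. Take $g_1=g_2=g_\rho:=1+w_\rho$ with $w_\rho=-c_\rho\,\Phi^{-1}\mathbf 1_{\{|k|>\rho\}}$ and $c_\rho=\bigl(\int_{\{|k|>\rho\}}\Phi^{-1}dk\bigr)^{-1}$. Since $\Phi(k)\sim |k|^2$ and $d=2$, $\int_{\{|k|>\rho\}}\Phi^{-1}dk$ diverges logarithmically as $\rho\to0$, so $c_\rho\to 0$; by construction $\tilde\pi[g]=0$, while the $\pi$-mean satisfies $\bar g\to 1$ and $\int\Phi(g_1^2+g_2^2)\,dk\to 16$, yet
\[
\mathcal E(g,g)=\frac12\int_{\Tt}\int_{\Tt} R(k,k')\bigl[w_\rho(k')-w_\rho(k)\bigr]^2\,dk\,dk'
\le 2\int_{\Tt}\Phi\, w_\rho^2\,dk=2c_\rho\longrightarrow 0 .
\]
This is exactly where your closing heuristic fails: in dimension two the compensating mass forced by $\int(g_1+g_2)\,dk=0$ need \emph{not} sit near $k=0$; it can be smeared with density proportional to $\Phi^{-1}$ over the whole torus at Dirichlet cost $O(1/\ln(1/\rho))$, precisely because $\int_{\Tt}\Phi^{-1}dk=\infty$ ($d=2$ is the critical case; for $d\ge3$ this obstruction disappears). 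Consequently no far-region estimate ``with no surviving residual'' can hold: the near-constant component of $g$ carries order-one $L^2$ mass away from the origin but asymptotically no energy, and in any correct proof of \eqref{wPin} that component must be absorbed by the $r\|g\|^2_{L^p_{\tilde\pi}}$ term rather than by $\mathcal E$. (Note that the same example contradicts the paper's own asserted lower bound $\mathcal E(f,f)\ge\tfrac12(1-c_1)\sum_\alpha\int\Phi f_\alpha^2$, since the Cauchy--Schwarz constant $1$ in the cross term is asymptotically attained within the mean-zero class; so you have located the genuinely delicate point of this lemma, but your proposal, like the paper's one-line argument, does not close it.)
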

We postpone the proof in Section \ref{Sec:alg_conv}.
Then 
$$
\int_{\R^2}dp\; {\| S_{\theta_N t}\;\hat{\mathcal{U}}_p\|}^2_{L_{\tilde{\pi}}^2}
\leq C\frac{1}{(\theta_Nt)^{1-\frac{2}{q}}}\int_{\R^2}dp\;\| \hat{\mathcal{U}}_p\|^2_{L^{q}_{\tilde{\pi}}}
$$
and
the r.h.s. of \eqref{ii} is bounded by
$$
 C_1 \|J\|_{\mathcal A_2}\|u_0 \|_{\mathcal A_q}\frac 1 {(\theta_N t)^{\frac{q-2}{2q}}}, \;\; q>2,
$$
which converges to zero for $N\to\infty$.
Finally,    
we can replace 
$\E_{(\alpha, k)}\big[e^{-ip Y_N(t)  } \big]$
with 
$\exp \{-\frac 1 2|p|^2\sigma^2 t\}$
. We have
\begin{equation}\label{iii}\begin{split}
&\left|\sum_{\alpha}\int_{\R^2\times\Tt}dp\,dk\, \hat J_\alpha(p,k) \tilde \pi [\hat u_0(p)]
\E_{(\alpha, k)}\big[e^{-ip Y_N(t)}- e^{-\frac 1 2|p|^2\sigma^2 t}\big]
\right|\\
&\leq C_0 \| J \|_{\mathcal B_2} \left(\int_{\R^2} dp\, \big|\tilde \pi [\hat u_0(p)]\big|^2
\mathbf 1_{\{|p|\geq R\}}\right)^{1/2}\\
&+ 
\int_{\R^2} dp\,\sup_{k\in\Tt}\big|\hat J(p,k)\big| \big|\tilde \pi [\hat u_0(p)]\big|
\mathbf 1_{\{|p|\leq R\}}\\
&\;\; \times
\int_{\Tt}dk\, \Big| \E_{(\alpha, k)}\big[e^{-ip Y_N(t)}- e^{-\frac 1 2|p|^2\sigma^2 t}\big]  \Big|,
\end{split}\end{equation} 
for any $R>0$. By Theorem \ref{theo:conv2}, the second integral on the r.h.s. converges to zero 
for $N\to\infty$, $\forall t\in[0, \mathcal T]$, then we send $R\to\infty$.

We conclude the proof by observing that, since
$$
\|S_t u^N(t)\|^2_{L^2(\R^2\times\Tt)}\leq \| u_0\|^2_{L^2(\R^2\times\Tt)},\qquad\forall N\geq1, \forall t\geq 0,
$$
then there   exists $\tilde u(t)\in L^2(\R^2\times\Tt)$ such that $u^N(t)$ weakly converges to $\tilde u(t)$
as $N\to\infty$. Moreover, we have just proved that for every $J\in\mathcal S$
$\langle J, u^N(t)\rangle \to \langle J, \bar u(t)\rangle$ as $N\to\infty$, for any $t>0$, where
$\bar u(t)$ is solution of \eqref{eq:diff}. Therefore, using the fact that the Schwartz space $\mathcal S$ is dense in $L^2$, we have $u^N(t)\to \bar u(t)$ weakly in $L^2(\R^2\times\Tt)$.

\subsection{Algebraic convergence rate}\label{Sec:alg_conv}
Suppose that,  for every $f\in L^2_{\tilde\pi}$
such that $\tilde \pi \big[f\big]=0$, the following weak Poincar\'e inequality holds: 
\begin{equation}\label{wPin}
\|f\|^2_{L^2_{\tilde\pi}} \leq \frac{C_0}
{r^{a-1}}
\mathcal E (f,f)
+r \|f \|^2_{L^q_{\tilde\pi}},\hspace{0.4cm}a>1,\;\;q>2,\;\forall r>0,
\end{equation}
where $\mathcal E (f,f)$ is the Dirichelet form.
By optimizing on $r$, one gets the following
Nash type inequality: 
\begin{equation*}\label{Nash}
\|f\|^2_{L^2_{\tilde\pi}} \leq C\big[\mathcal E(f,f) \big]^{\frac 1 a}
\Big(\|f \|^2_{L^q_{\tilde\pi}}\Big)^{1-\frac 1 a},\hspace{0.4cm}\,\; q>2,\; a>1.
\end{equation*}
The $L^q_{\tilde\pi}$ norm is defined in a dense subset of $L^2_{\tilde\pi}$. Moreover, the $L^q_{\tilde\pi}$ norm is  monotone under the semi-group $\{S_t\}_{t\geq 0}$,  namely $\|S_t f \|^2_{L^q_{\tilde\pi}}\leq \|f \|^2_{L^q_{\tilde\pi}}$ $\forall t\geq 0$, for every $q\geq 1$ (contractivity property of a Markov semi-group).
Therefore,  we can apply Theorem 2.2 of \cite{Li} (see also  \cite{RW} and \cite{BZ}) and we get
 the following algebraic rate of convergence 
\begin{equation*}
\|S_t f\|^2_{L_{\tilde{\pi}}^2}\leq C \| f\|^2_{L^{q}_{\tilde{\pi}}}\frac 1 
{t^{1/(a-1)}},
\hspace{0.4cm}q>2,
\end{equation*}
which holds for every  $f\in L^2_{\tilde\pi}$.
Then, in order to prove  Lemma \ref{lemma:alg_conv}, it suffices  to show that \eqref{wPin} holds.

The Dirichelet form has the following expression:
\begin{equation*}\begin{split}
\mathcal E (f,f) 
 &=\frac 1 2 \sum_{\alpha=1,2}\sum_{\beta\neq\alpha}\int_{\Tt}dk\,f(\alpha,k)
\int_{\Tt}dk'\,R(k,k')\big[ f(\beta,k')-f(\alpha, k) \big]\\
& =\frac 1 2 \sum_{\alpha=1,2}\int_{\Tt\times\Tt} dk\, \Phi(k)f(\alpha,k)
\left[1- P \right]f(\alpha,k),\\
\end{split}\end{equation*}
where $P$ is the operator acting 
the vector-valued functions $f:\Tt\to\R^2$
$$
P f(\alpha,k)=\sum_{\beta\neq\alpha}\int_{\Tt}P(k,dk')f(\beta,k'),\hspace{0.4cm}\forall\alpha=1,2.
$$
Here $P(k,dk')$ is the probability kernel defined in \eqref{def:kerP}. The corresponding invariant measure is  $\pi(\alpha,dk)=\frac 1 {16}\Phi(k)dk$. 
Since the operator $ P$ is compact with a positive kernel $P(k,dk')$, using the same arguments of  \cite{JKO}, Lemma 3.2, one can show  that $0$ is a simple eigenvalue for $1- P$, and therefore 
the following gap estimate is obtained 
\begin{equation}\label{sg}
\mathcal{E}(f,f)\geq c\sum_{\alpha=1,2}\int_{\Tt}dk\,\Phi(k)|f(\alpha,k)-\pi[f]|^2,
\end{equation}
with $c>0$ and $\pi[f]$   the expectation value with respect to the measure 
$\pi(\alpha, dk)$.
We   define the set 
$A_\delta=\{k\in\Tt: |k|>\delta  \}$, with $\delta\in (0,1)$, 
and we denote by $A_\delta^c$ its complement. 
Then the r.h.s.  of \eqref{sg}    
 is bounded from below by
\begin{equation*}
\begin{split}
&c\sum_{\alpha=1,2}\int_{\Tt}dk\,\Phi(k)\mathbf 1_{\{A_{\delta}\}}|f(\alpha,k)-\pi[f]|^2\\
&\geq c_1\inf_{\{k\in A_\delta\}}\Phi(k)\sum_{\alpha=1,2}\int_{A_\delta}dk|f(\alpha,k)-\pi[f]|^2.
\end{split}
\end{equation*}
We observe that 
\begin{equation*}\begin{split}
\sum_{\alpha=1,2}\int_{A_\delta}dk|f(\alpha,k)-\pi[f]|^2
 \geq &\|f\mathbf 1_{\{A_{\delta}\}}\|^2_{L^2_{\tilde\pi}}-2\pi[f]\,\tilde\pi[f\mathbf 1_{\{A_{\delta}\}}]\\
=&\|f\mathbf 1_{\{A_{\delta}\}}\|^2_{L^2_{\tilde\pi}}+2\pi[f]\,\tilde\pi[f\mathbf 1_{\{A_{\delta}^c\}}]
\end{split}\end{equation*}
where in the last equality we use the fact that $\tilde\pi[f]=0$. Since
$\inf_{\{k\in A_\delta\}}\Phi(k)=c_1\delta^2$, 
we obtain
\begin{equation}\label{eff0}\begin{split}
\big\|f\mathbf 1_{\{A_{\delta}\}}\|^2_{L^2_{\tilde\pi}}
&\leq \frac {C}{\delta^2}\mathcal E (f,f)-2\pi[f]\,\tilde\pi[f\mathbf 1_{\{A_{\delta}^c\}}]\\
&\leq \frac {C}{\delta^2}\mathcal E (f,f)+C'\|f\|^2_{L^p_{\tilde\pi}}\,
\Big(\tilde\pi[A^c_{\delta}]\Big)^{1-\frac 1 p},
\end{split}\end{equation}
with $p>1$.
Now we observe that 
\begin{equation*}\begin{split}
\big\|f\|^2_{L^2_{\tilde\pi}}&=\big\|f\mathbf 1_{\{A_{\delta}\}}\|^2_{L^2_{\tilde\pi}}
+\big\|f\mathbf 1_{\{A^c_{\delta}\}}\|^2_{L^2_{\tilde\pi}}\\
&\leq \big\|f\mathbf 1_{\{A_{\delta}\}}\|^2_{L^2_{\tilde\pi}}+
 \| f \|^2_{L^{2b}_{\tilde\pi}}\,
\Big(\tilde\pi[A^c_{\delta}]\Big)^{1-\frac1 b},\;\;b>1.
\end{split}\end{equation*}
and since $\tilde\pi[A_\delta^c]=\delta^2$, finally we get 
\begin{equation*}
\big\|f\|^2_{L^2_{\tilde\pi}}\leq \frac {C} {\delta^2} \mathcal E (f,f)+C'(\delta^2)^{1-\frac1 b}
\| f \|^2_{L^{2b}_{\tilde\pi}},\;\; b>1.
\end{equation*}
Setting $r=C'\delta^{2(1-\frac 1 b)}$ and $q=2b$, we get the weak Poincar\'e inequality \eqref{wPin}
with $a-1=\frac q {q-2}$.
\subsection*{Remark.} We can extend this proof to the general case of the process in $d$-dimensions. 
We get the following algebraic convergence rate:
\begin{equation}\label{alg_conv_d}
\|S_t f\|^2_{L_{\tilde{\pi}}^2}\leq C \| f\|^2_{L^{p}_{\tilde{\pi}}}\frac{1}{t^{\frac d 2 (1-\frac 2 q)}},
\;\;q>2,\; \forall d\geq 1.
\end{equation}

\section{Details}\label{sec:det}
We start with some preliminary results on $P^m$, the $m-$th
convolution integral of $P$, the probability kernel defined in
\ref{def:kerP}.
By direct computation
\begin{equation}\label{form:prob}
P^m(k,dk')=\frac{2}{\sum_{\gamma=1}^2\sin^2(\pi
k_\gamma)}\sum_{\al=1}^2\sum_{\beta=1}^2\sin^2(\pi
k_\al)A^{(m)}_{\al,\beta}\sin^2(\pi k'_\beta)dk'
\end{equation}
where, $\forall \al,\beta\in\{1,2\}$,
\begin{equation}
A^{(1)}_{\alpha,\beta}=\delta_{\alpha,\beta},\hspace{1cm}A^{(m+1)}_{\al,\beta}=\left[a^{m}\right]_{\al,\beta}
\hspace{0.4cm}\forall m\geq 1.
\end{equation}
Here $a$ is a $2\times 2$ real matrix with elements
\begin{eqnarray*}
a_{11}=a_{22} &=& 2\int_{\Tt}dk\; \frac{\sin^4(\pi k_1)}{\sum_{\alpha}\sin^2(\pi k_\alpha)},\\
a_{12}=a_{21} &=& 2\int_{\Tt}dk\; \frac{\sin^2(\pi k_1)\sin^2(\pi k_2)}{\sum_{\alpha}\sin^2(\pi k_\alpha)}.
\end{eqnarray*}
Observe that the condition
\begin{equation*}
\int_{\Tt} P^m(k,dk')=1\hspace{1cm}\forall m\geq 1,
\end{equation*}
implies
\begin{equation}\label{bound:A}
\sum_{\beta=1}^2 A_{\al,\beta}^{(m)}=1,\hspace{1cm}\forall
\al=1,2,\;\; \forall m\geq 1,
\end{equation}
and thus
\begin{equation}\label{ubP}
P^m(k,dk')\leq 2\sum_{\beta=1,2}\sin^2(\pi k'_\beta)dk',
\hspace{0.6cm}\forall k\in\Tt,\; \forall m\geq 1.
\end{equation}

\subsection{Proof of Proposition
\ref{prop:convZ_N}}\label{sec:proofProp} Fix
$\lambda:=(\lambda_1,\lambda_2)$ with $\lambda_1^2+\lambda_2^2=1$.
We will follow the strategy of Durrett and Resnick \cite{DR} to
prove that $\langle Z^<_N,\lambda\rangle:=\lambda_1
Z_N^{1<}+\lambda_2 Z_N^{2 <}$ converges weakly to a Wiener process
$W_c$. They use a result of Freedman \cite{Fr1}, pages 89-93, on
martingale difference arrays with uniformly bounded variables. We
start with the following

\begin{definition}\label{def:mda}
A collection of random variables $\{\xi_{N,i}\}$, $N\geq 1$, $i\geq
1$ and $\sigma$-fields $\mathcal{F}_{N,i}$, $i\geq 0$, $N\geq 1$ is
a martingale difference array if
\begin{itemize}
\item[(i)] for all $N\geq 1$, $\mathcal{F}_{N,i}$, $i\geq 0$ is a
nondecreasing sequence of $\sigma$-fields;
\item[(ii)] for all $N\geq 1$, $i\geq 1$, $\xi_{N,i}$ is
$\mathcal{F}_{N,i}$ measurable;
\item[(iii)] for all $N\geq 1$, $E\left[\xi_{N,i}\vert
\mathcal{F}_{N,i-1}\right]=0$ a.s.
\end{itemize}
\end{definition}

We introduce the following notations:
\begin{equation}\begin{split}\label{def:lambdapsi}
\langle \lambda, \bar \Psi_{N,m}\rangle := & \lambda_1
\frac{e_m\psi_m^1}{\textstyle{\sqrt{N\ln
N}}}\mathbf{1}_{\left\{e_m\vert
   \psi_m^1 \vert \leq \sqrt N\right\}}\\
  & + \lambda_2 \frac{ e_m \psi_m^2}{\textstyle\sqrt{N\ln N}} \mathbf{1}_{\left\{e_m\vert
    \psi_m^2 \vert \leq \sqrt N\right\}},
\end{split}\end{equation}
$\forall N\geq 2, m\geq 0$, and, for $N=1$, $m\geq 0$ 
\begin{equation*}
\langle \lambda, \bar \Psi_{1,m}\rangle =
     \lambda_1 e_m\psi_m^1 \mathbf{1}_{\left\{e_m\vert
   \psi_m^1 \vert \leq 1\right\}} + \lambda_2 e_m\psi_m^2 \mathbf{1}_{\left\{e_m\vert
    \psi_m^2 \vert \leq  1\right\}}.
\end{equation*}

For all $N\geq 1$, $m\geq 0$, we denote with ${\mathcal
F}_{N,\;m}$ the $\sigma$-field generated by
$\{X_0,..,X_m\}\times\{e_0,..,e_m\}$, where $\{X_m\}_{m\geq 0}$ is
the Markov chain with value in $\Tt$.
Then we observe that $\{\langle \lambda,\bar \Psi_{N,m}\rangle,
\mathcal{F}_{N,\;m}\}_{ N\geq 1, m\geq 1}$ is a martingale
difference array. In particular, condition (iii) of $\ref{def:mda}$
can be easily checked using the explicit form of probability kernel
$P[k,dk']$ .

By definition, the variables $\langle \lambda,\bar
\Psi_{N,m}\rangle$ are
 uniformly bounded in $m$, i.e. for all $N\geq 1$
$\left\vert\langle \lambda,\bar \Psi_{N,m}\rangle\right\vert \leq
\ve_N$ , $\forall m\geq 0$,
 where $\ve_N=\frac{2}{\sqrt{\ln N}}$ if $N\geq 2$,  and $\ve_1=2$.
 In particular $\ve_N \downarrow 0$ when $N\to\infty$.

For every $N\geq 1$, $j\geq 1$, let us define
\begin{eqnarray}\label{def:S}
 \langle \lambda,S_{N,j}\rangle& = &
  \sum_{m=1}^{j} \langle\lambda,
\bar \Psi_{N,m}\rangle,\\\label{def:V} \langle\lambda,
V_{N,j}\rangle& = &\sum_{m=1}^{j}E\left[\langle \lambda,\bar
\Psi_{N,m}\rangle^2\vert \mathcal{F}_{N,\;m-1}\right].
\end{eqnarray}
We will prove  in lemma \ref{lemmaV} that
$\Pro\left[\lim_{j\to\infty}\langle\lambda,V_{N,j}\rangle=\infty\right]=1$,
for all $N\geq 1$, i.e. the martingale difference array
$\{\langle\lambda, \bar \Psi_{N,m}\rangle, \mathcal{F}_{N,\;m}\}_{
N\geq 1, m\geq 0}$  satisfies the hypotheses of Theorem 2.1 in
\cite{DR}. Thus, setting
\begin{equation*}
j_{N,\lambda}(t)=\sup\{j\vert \langle\lambda,V_{N,j}\rangle\leq
t\},
\end{equation*}
 we get that
$\langle\lambda,S_{N,j_{N,\lambda}(\cdot)}\rangle$ converges weakly
as a sequence of random elements of $D[0,\mathcal T]$ to a standard
Wiener process $W$.



Now let $\phi_{N,\lambda}(t)=\langle\lambda,V_{N,\lfloor
N\theta\rfloor}\rangle$, $\forall t\in[0,\mathcal T]$. By
definition
 $$j_{N,\lambda}\circ \phi_{N,\lambda}(t)=\lfloor
N t\rfloor.$$
 In order to prove that $\phi_{N,\lambda}$
converges in probability to the function $\phi:$
$\phi(t)=\sigma^2 t$, it suffices to show that
$\phi_{N,\lambda}(t)\stackrel{P}{\to} \sigma^2 t$, $\forall
t\in[0,\mathcal T]$, since $\phi$ is continuous and
$\phi_{N,\lambda}$ is monotone. That  will be proved in lemma
\ref{lemmaV}.
%
Then
$$(\langle\lambda,S_{N,j_{N,\lambda}}\rangle,
\phi_{N,\lambda})\Rightarrow (W, \phi),$$
(Billingsley \cite{Bi}, Theorem 3.9)
 and therefore
$$\langle\lambda,S_{N,j_{N,\lambda}}\rangle\circ\phi_{N,\lambda}\Rightarrow
W\circ\phi$$ 
(Billingsley \cite{Bi}, Lemma pg. 151).

Finally, $$\langle\lambda,S_{N,\lfloor
N\cdot\rfloor}\rangle=\langle\lambda,S_{N,j_N(\phi_N(\cdot))}\rangle\Rightarrow
W_\sigma^2,$$ where convergence is in distribution
 on the space $D[0,\mathcal T]$ equipped with the Skorokhod
 $J_1$-topology.

The process $\langle\lambda,{\tilde S}_N(t)\rangle:=
\sum_{m=0}^{\lfloor N t\rfloor -1} \langle \lambda,\bar
\Psi_{N,m}\rangle$ 
converges  also to $W_\sigma$. For every $N\geq 2$, $\langle
Z_N^{<},\lambda\rangle=\lambda_1 Z_N^{1 <} + \lambda_2 Z_N^{2<}$ is
the continuous function defined by linear interpolation between its
values $\langle\lambda, \tilde{S}_{N}(m/N)\rangle$ at points $m/N$.
The two sequences
$\{ \langle\lambda,\tilde{S}_{N}(t)\rangle, 0\leq t\leq\mathcal T \}$ and 
$\{\langle Z_N(\theta),\lambda\rangle 0\leq t\leq\mathcal T\}$ are
asymptotically equivalent, i.e. if either converges in distribution 
as $N\to\infty$, then so does the other. Convergence of 
$\langle Z_N^{<},\lambda\rangle$ to $W_\sigma$ is 
in distribution on the space of continuous functions equipped with the uniform topology.

We conclude this subsection with the main Lemma.
\begin{lemma}\label{lemmaV}
For every $N\geq 1$, for every unitary vector $\lambda\in\R^2$,
\begin{equation}
\Pro\left[\lim_{j\to\infty}\langle\lambda,V_{N,j}\rangle=\infty\right]=1.
\end{equation}

Moreover, for every $\delta>0$, for every unitary vector
$\lambda\in\R^2$,
\begin{equation}\label{eq:V1}
\lim_{N\to\infty} \Pro\Big[\left\vert\langle\lambda, V_{N,\lfloor
N\theta\rfloor}\rangle-\sigma^2\theta\right\vert
>\delta\Big]=0,
\end{equation}
$\forall \theta\in[0,\mathcal T]$.
\end{lemma}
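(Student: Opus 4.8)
The plan is to recognize $\langle\lambda,V_{N,j}\rangle$ as an additive functional of the driving chain $\{X_m\}$ and to exploit the uniform ergodicity established in Section 2. Since $e_m$ is independent of $\mathcal F_{N,m-1}$ and the conditional law of $X_m$ is $P(X_{m-1},\cdot)$, the conditional second moment $\E[\langle\lambda,\bar\Psi_{N,m}\rangle^2\mid\mathcal F_{N,m-1}]$ is a deterministic nonnegative function $h_N(X_{m-1})$, so that $\langle\lambda,V_{N,j}\rangle=\sum_{m=1}^{j}h_N(X_{m-1})$. Expanding the square and integrating against $P(k,dk')$ and the law of $e_m$ gives
\begin{equation*}
h_N(k)=\frac{1}{N\ln N}\int_{\Tt}P(k,dk')\sum_{\al=1,2}\lambda_\al^2\,(\psi^\al(k'))^2\,\E_e\big[e^2\1_{\{e|\psi^\al(k')|\le\sqrt N\}}\big]+\text{(cross term)}.
\end{equation*}
The cross term carries the factor $\psi^1(k')\psi^2(k')=v_1v_2/\Phi^2$, which is odd under $k'_1\mapsto -k'_1$, whereas $P(k,dk')$, $\Phi$ and the truncation levels $|\psi^\al|$ are even; hence after integration in $k'$ it vanishes identically. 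This is the diagonal-covariance structure already announced in Section 2.

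For the first assertion I fix $N$. Then $h_N$ is a fixed bounded nonnegative function which is strictly positive on an open set, so $\E_\pi[h_N]>0$ because $\pi$ is positive on open sets. By the Doeblin condition and uniform ergodicity (\cite[Thm.~16.0.2]{MT}), the strong law of large numbers for the chain gives $j^{-1}\langle\lambda,V_{N,j}\rangle\to\E_\pi[h_N]>0$ almost surely for every initial law, whence $\langle\lambda,V_{N,j}\rangle\to\infty$ a.s.

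For the second assertion I first compute the stationary mean. Using that $X_0\sim\pi$ forces $X_1\sim\pi$ and $(\psi^\al)^2\E_e[e^2\1]=\E_e[(e\psi^\al)^2\1]$, stationarity yields
\begin{equation*}
\E_\pi[h_N]=\frac{1}{N\ln N}\sum_{\al=1,2}\lambda_\al^2\,\E_\pi\big[(e\psi^\al)^2\1_{\{|e\psi^\al|\le\sqrt N\}}\big].
\end{equation*}
By the definition $(\ref{def:c})$ and the symmetry between the two components, each bracket equals $\sigma^2\ln N\,(1+o(1))$, so with $|\lambda|=1$ one gets $\E_\pi[h_N]=\sigma^2N^{-1}(1+o(1))$ and $\lfloor N\theta\rfloor\,\E_\pi[h_N]\to\sigma^2\theta$. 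Next I bound $h_N$ uniformly: the estimate $P(k,dk')\le\tfrac14\Phi(k')dk'$ (the $m=1$ case of $(\ref{ubP})$) converts the $P$-average into the same truncated integral, giving $\|h_N\|_\infty\le C/N$.

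Finally I control the fluctuations. Uniform ergodicity provides geometric mixing $\|P^m(k,\cdot)-\pi\|_{TV}\le C\rho^m$, so for the bounded nonnegative $g=h_N$ one has $|\mathrm{Cov}_\pi(g(X_0),g(X_\ell))|\le C\|g\|_\infty\,\E_\pi[g]\,\rho^\ell$; summing the geometric series yields $\mathrm{Var}_\pi\big(\sum_{m=1}^{\lfloor N\theta\rfloor}g(X_{m-1})\big)\le C'\lfloor N\theta\rfloor\,\|g\|_\infty\,\E_\pi[g]\le C''\theta/N\to0$. The non-stationary start under $\mu$ shifts the mean and variance by $\sum_m\|\mu P^{m}-\pi\|_{TV}\,O(\|h_N\|_\infty)=O(1/N)$, hence is negligible. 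Chebyshev's inequality then gives $\langle\lambda,V_{N,\lfloor N\theta\rfloor}\rangle\stackrel{P}{\to}\sigma^2\theta$. The main obstacle is the infrared analysis near $k=0$ producing the logarithmic asymptotics $\E_\pi[(e\psi^\al)^2\1_{\{|e\psi^\al|\le\sqrt N\}}]\sim\sigma^2\ln N$ together with the matching uniform bound $\|h_N\|_\infty\le C/N$; once these are in hand, the rest is the ergodic-theorem and mixing machinery applied to $h_N$.
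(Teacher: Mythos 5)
Your proposal is correct, and it follows the paper's overall skeleton -- identify $\langle\lambda,V_{N,j}\rangle=\sum_{m=1}^{j}f_N(X_{m-1})$ as an additive functional of the chain, establish $\E_\pi[f_N]\sim\sigma^2/N$ and $\|f_N\|_\infty\le C/N$ via (\ref{ubP}), kill the cross term by the odd/even symmetry, and finish with Chebyshev -- but the two halves are executed by genuinely different mechanisms. For the divergence of $V_{N,j}$, the paper does not invoke any ergodic theorem: it proves the pointwise lower bound $f_N(k)\ge C_0/N$ with $C_0>0$ directly from (\ref{form:prob}), so that $\langle\lambda,V_{N,j}\rangle\ge jC_0/N$ deterministically; your route through the strong law of large numbers for the uniformly ergodic chain is valid but heavier (and your citation of \cite[Thm.~16.0.2]{MT} covers the uniform ergodicity, not the SLLN itself, which is a separate standard fact for positive Harris chains). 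For the convergence (\ref{eq:V1}), the paper deliberately avoids any rate: it splits the double sum over pairs $(m,n)$ using a free cutoff $M$, bounds the near-diagonal blocks by $C\mathcal{T}M/N$, controls the far pairs by $\sup_k\int|P^{M}(k,d\tilde k)-\pi(d\tilde k)|$, and sends $M,N\to\infty$ with $M/N\to 0$; you instead use the geometric bound $\|P^m(k,\cdot)-\pi\|_{TV}\le C\rho^m$ (which is legitimate, since by the very theorem the paper cites uniform ergodicity is equivalent to uniform geometric ergodicity) and sum the geometric series, which gives the cleaner quantitative estimate $\mathrm{Var}\le C\theta/N$. One step you should tighten: the claim that the non-stationary start ``shifts the variance by $\sum_m\|\mu P^m-\pi\|_{TV}\,O(\|h_N\|_\infty)$'' is not the right bookkeeping as written. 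The clean fix is to avoid comparing variances altogether and bound the covariances directly under $\E_\mu$: for $m<n$,
\begin{equation*}
\bigl|\E_\mu\bigl[(h_N(X_{m-1})-\pi(h_N))(h_N(X_{n-1})-\pi(h_N))\bigr]\bigr|
\le 2\|h_N\|_\infty^2\,C\rho^{\,n-m},
\end{equation*}
uniformly in the initial law, by conditioning on $\mathcal F_{m-1}$ and using $|P^{\,n-m}h_N(x)-\pi(h_N)|\le\|h_N\|_\infty C\rho^{\,n-m}$; together with the $O(1/N)$ shift of the mean this closes the Chebyshev argument. With that repair your proof is complete, and arguably sharper than the paper's, at the cost of relying on the geometric rate where the paper only needs qualitative total-variation convergence.
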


\begin{proof}
Fix $\lambda\in\R^2$, with $\vert\lambda\vert^2=1$. $\forall N\geq
2$, we define $f_N:\Tt\to\R^2$
\begin{equation}\label{lm0}\begin{split}
f_N(k) =&\int_0^\infty dz\; e^{-z}\\
&\times \int_{\Tt}P(k,dk')\left(
\sum_{\al=1,2}\lambda_\al\frac{ z\; \psi^\al(k')}{\sqrt{N\ln
N}}\mathbf{1}_{\{z\;\vert\psi^\al(k') \vert \leq \sqrt
N\}}\right)^2.
\end{split}\end{equation}
Using (\ref{form:prob}), 
we get 
$f_N(k)\geq C_0/N$, with $0\leq C_0<\infty.$
Since 
 $$f_{N}(X_m)=\E\left[\langle \bar \Psi_{N,m+1},\lambda\rangle^2 \vert
\mathcal{F}_m\right],\hspace{1cm}\forall m\geq 0$$
 then,  for all $N\geq 1$, $\langle\lambda,V_{N,j}\rangle\geq  j\;C_0 N^{-1}$ which goes to infinity 
 for $j\to\infty$, a.s.

Now we focus on (\ref{eq:V1}). By Chebychev inequality, for every
$N\geq 1$
\begin{equation}\label{lm1}\begin{split}
&\Pro\Big[\big\vert \langle\lambda,V_{N,\lfloor
Nt\rfloor}\rangle-\sigma^2t\big\vert
>\delta\Big]\\
&
   \leq \Pro\left[\Big\vert
      \sum_{n=1}^{\lfloor Nt\rfloor}\Big(\E\left[\langle \bar \Psi_{N,n},\lambda\rangle ^2\vert
{\mathcal F}_{n-1}\right] -\frac{\sigma^2}{N}\Big)
     \Big\vert > \delta-\frac{1}{N}\right]\\
&\leq \frac{1}{{\tilde \delta}_N ^2}\sum_{n=1}^{\lfloor
Nt\rfloor}
          \E\left[ \Big(\E\left[\langle \bar \Psi_{N,n},\lambda\rangle ^2\vert
{\mathcal F}_{n-1}\right] -\frac{\sigma^2}{N}\Big)^2  \right]\\
&\quad +\frac{1}{{\tilde \delta}_N ^2}\sum_{n=1}^{\lfloor
         Nt\rfloor}\sum_{m\neq n}
\E\left[  \Big(\E\left[\langle \bar \Psi_{N,n},\lambda\rangle ^2\vert
{\mathcal F}_{n-1}\right] -\frac{\sigma^2}{N}\Big)\right.\\
&\hspace{1.5cm}\left.\times
\Big(\E\left[\langle \bar
\Psi_{N,m},\lambda\rangle ^2\vert {\mathcal F}_{m-1}\right]
-\frac{\sigma^2}{N}\Big)\right],
\end{split}\end{equation}
where ${\tilde \delta}_N=\delta -N^{-1}$. By (\ref{ubP}), we get
\begin{equation}\label{ubf}
 \E\left[\langle \bar
\Psi_{N,m},\lambda\rangle ^2\vert {\mathcal F}_{m-1}\right]=f_N(X_{m-1})\leq \frac{C_0}{N},
\end{equation}
thus the first sum on the r.h.s. of (\ref{lm1}) is bounded by ${\tilde \delta}_N ^{-2}C_1 T/N$,
with $C_1$ finite.
Let us consider the second sum on the r.h.s. of (\ref{lm1}).
%
For $n>m$
\begin{equation*}\begin{split}
&\E\Big[\E\left[\langle \bar \Psi_{N,n},\lambda\rangle
 ^2\vert {\mathcal F}_{n-1}\right]
 \E\left[\langle \bar \Psi_{N,m},\lambda\rangle
 ^2\vert {\mathcal F}_{m-1} \right]\Big]\\
 & = \E\Big[ \E\left[\langle \bar \Psi_{N,m},\lambda\rangle
 ^2\vert {\mathcal F}_{m-1} \right]
  \E\big[ \E\left[\langle \bar \Psi_{N,n},\lambda\rangle
 ^2\vert {\mathcal F}_{n-1}\right]  \vert{\mathcal F}_{m-1} \big]
 \Big].
\end{split}\end{equation*}
We set
$$g_N^{n-m}(X_{m-1}):=
 \E\Big[\E\left[\langle \bar \Psi_{N,n},\lambda\rangle
 ^2\vert {\mathcal F}_{n-1}\right]\vert {\mathcal F}_{m-1}\Big],
$$
where, for every $l\geq 1$, $N\geq 1$, the function $g:\Tt\to\R^2$ is given by
$$
g_N^l(k)= \int_{\Tt} dk'\; P^l(k,dk')f_N(k'),
$$
with $f_N$ defined in (\ref{lm0}). By (\ref{ubP}) and (\ref{ubf}) we get
\begin{equation}\label{lm4}
g_N^l(k)\leq \frac{C_0}{N},\hspace{1cm}\forall k\in\Tt,\;\forall l\geq 1.
\end{equation}
We fix $M$, $1\leq M<N$ and we get
\begin{equation*}
\begin{split}
&\sum_{n=1}^{\lfloor
         Nt\rfloor}\sum_{m\neq n}\E\Big[
 \E\left[\langle \bar \Psi_{N,n},\lambda\rangle
 ^2\vert {\mathcal F}_{n-1}\right]
 \E\left[\langle \bar \Psi_{N,m},\lambda\rangle
 ^2\vert {\mathcal F}_{m-1}\right]\Big]\\
 &=2\sum_{m=1}^{M}\sum_{n=m+1}^{\lfloor Nt\rfloor}
\E\Big[f_N(X_{m-1})g_N^{n-m}(X_{m-1})
 \Big]\\
& \;\;+2\sum_{m=M+1}^{\lfloor
Nt\rfloor}\sum_{n=m+1}^{m+M}\E\Big[f_N(X_{m-1})g_N^{n-m}(X_{m-1})\Big]
\\&
 \;\;+2\sum_{m=M+1}^{\lfloor Nt\rfloor}\sum_{n=m+M+1}^{\lfloor
Nt\rfloor}\E\Big[f_N(X_{m-1})g_N^{n-m}(X_{m-1})\Big].
\end{split}\end{equation*}
By (\ref{lm4}), the first and the second sum on the r.h.s. are
bounded form above by $C \mathcal T M/N$, with $C$ finite. We denote
by $\mu P^{m-1}$  the convolution integral of the initial measure
$\mu$ and the probability $P^{m-1}$. For every $l\geq 1$,
\begin{equation*}
\begin{split}
\E\Big[f_N(X_{m-1})g_N^{l}(X_{m-1})\Big]= &
\E_{\pi}\Big[f_N(X_{m-1})g_N^{l}(X_{m-1})\Big]\\
&+ \int_{\Tt} \left[\mu P^{m-1}(dk)-\pi(dk)\right]f_N(k)g_N^l(k)\\
\end{split}
\end{equation*}
where the last term is bounded by $C' N^{-2}\int_{\Tt} \left\vert\mu
P^{m-1}(dk)-\pi(dk)\right\vert$.
 Moreover, for every $l\geq 1$
\begin{equation*}
\begin{split}
&\E_{\pi}\Big[f_N(X_{m-1})g_N^{l}(X_{m-1})\Big]\\ &=  \int_{\Tt}
\pi(dk)f_N(k)\int_{\Tt} dk' P^l(k,dk')f_N(k')\\
& \leq \left(\int_{\Tt} \pi(dk)f_N(k)\right)^2 +\frac{C'
}{N^2}\int_{\Tt} \left\vert\mu P^{m-1}(dk)-\pi(dk)\right\vert.
\end{split}
\end{equation*}
We get
\begin{equation*}
\begin{split}
&\sum_{n=1}^{\lfloor
         Nt\rfloor}\sum_{m\neq n}\E\Big[
 \E\left[\langle \bar \Psi_{N,n},\lambda\rangle
 ^2\vert {\mathcal F}_{n-1}\right] \E\left[\langle \bar
 \Psi_{N,m},\lambda\rangle
 ^2\vert {\mathcal F}_{m-1}\right]\Big]\\
& \leq \lfloor Nt\rfloor (\lfloor Nt\rfloor-1)
\left(\E_\pi\left[\langle \bar \Psi_{N,1},\lambda\rangle
 ^2 \right]\right)^2\\
 & \quad + C \mathcal T \frac{M}{N}+ C' \mathcal T \int_{\Tt} \left\vert\mu
 P^{M}(dk)-\pi(dk)\right\vert,
\end{split}\end{equation*}
we $C$ and $C'$ finite. In the same way one can prove that
\begin{equation*}
\begin{split}
\sum_{n=1}^{\lfloor
         N t\rfloor}\E\left[\langle \bar \Psi_{N,n},\lambda\rangle
 ^2 \right] & \leq \lfloor Nt\rfloor\E_\pi\left[\langle \bar \Psi_{N,n},\lambda\rangle
 ^2 \right]\\
 & \quad+ C \mathcal T \frac{M}{N}
 + C' \mathcal T \int_{\Tt} \left\vert\mu
 P^{M}(dk)-\pi(dk)\right\vert,
\end{split}\end{equation*}
with some $C$, $C'$ finite, and finally we get
\begin{equation*}\begin{split}
\Pro\Big[\big\vert \langle\lambda,V_{N,\lfloor
Nt\rfloor}\rangle-\sigma^2t\big\vert
>\delta\Big]
\leq &\frac{1}{{\tilde \delta}^2_N}\; C \mathcal T \frac{M}{N}\\
&+
\frac{1}{{\tilde \delta}^2_N}\; C' \mathcal T \int_{\Tt}
\left\vert\mu
 P^{M}(dk)-\pi(dk)\right\vert,
\end{split}\end{equation*}
where $C$, $C'$ are finite. (\ref{eq:V1}) is proved by sending $M,
N\to\infty$ in such a way that $M/N\to 0$.

\end{proof}

\subsection{Proof of Lemma (\ref{indep})}\label{sec:proofind}
We use the central limit theorem for martingale difference array
(\cite{Dv1}, Theorem 1; see also \cite{Dv2}, \cite{He}) which states
the follows: fix $t>0$, and let
$\{\xi_{N,i},\mathcal{F}_{N,i}\}_{N\geq 1,\;i\geq 0}$  be a
martingale difference array such that
\begin{eqnarray*}
&(i)& \sum_{i=1}^{\lfloor N t\rfloor} \E\left[\xi_{N,i}^2\vert
{\mathcal F}_{N, i-1} \right]\stackrel{P}{\to}c t, \hspace{1cm}
N\uparrow\infty;\\
&(ii)&\sum_{i=1}^{\lfloor N t\rfloor} \E\left[\xi_{N,i}^2
\mathbf{1}_{\{\vert\xi_{N,i} \vert>\ve\}}\vert {\mathcal F}_{N, i-1}
\right]\stackrel{P}{\to}0, \hspace{1cm} N\uparrow\infty,\quad
\forall \ve>0.
\end{eqnarray*}
Then $$\sum_{i=1}^{\lfloor N t\rfloor} \xi_{N,i}\Rightarrow
{\mathcal N}(0, c t).$$

By definition of $Z_N^<$, $\forall \lambda\in\R^2$
\begin{equation}\label{hi}
\langle\lambda,Z_N^<(t)\rangle=\langle\lambda, S_{N,\lfloor
Nt\rfloor}\rangle+(Nt-\lfloor Nt\rfloor)\langle\lambda,
{\bar\Psi}_{\lfloor Nt\rfloor}\rangle,
\end{equation}
$\forall t\in[0, \mathcal T]$, where $\langle\lambda,
S_{N,\cdot}\rangle$ is defined in (\ref{def:S}). The rightmost term
in (\ref{hi}) goes to zero in probability by Chebyshev's inequality.
%
We fix  $\lambda, \mu\in\R^2$ and $0\leq s< t\leq {\mathcal
T}$, and we define the following array of variables:
\begin{equation*}
\txi_{N,i}= \left\{
\begin{array}{lll}\vspace{0.4cm}
\langle \lambda,\bar \Psi_{N,\;i}\rangle & \mbox{if} & 0\leq i\leq \lfloor,
N s\rfloor -1,\\
\langle \mu,\bar \Psi_{N,\;i}\rangle & \mbox{if} &\lfloor N s\rfloor\leq i,\hspace{1.5cm}\forall N\geq 1.
\end{array}\right.
\end{equation*}
We denote with $\mathcal{F}_{N,i}$ the
$\sigma$-algebra generated by $(X_0,...,X_i)\times (e_0,..,e_i)$,
 $\forall N\geq 1$, $ i\geq 0$. Then 
$\{\txi_{N,i},\mathcal{F}_{N,i} \}_{N\geq 1,\; i\geq 0}$ is a
martingale difference array. In particular, since $\vert\langle \nu,\bar
\Psi_{N,\;i}\rangle\vert\leq 2(\ln N)^{-1/2}$ for every $i\geq
1$, for every unitary vector $\nu\in\R^2$, it follows that  $\forall \ve>0$, 
there exists $\bar N$ such that  $\vert \txi_{N,i}\vert <\ve$, $\forall
N\geq \bar N$, $\forall i\geq 1$. 
Therefore condition $(ii)$ is 
satisfied.

Moreover, with similar arguments of the proof of (\ref{eq:V1}), one
can prove that
\begin{equation*}
\sum_{i=1}^{\lfloor N t\rfloor} \E\left[\txi_{N,i}^2\vert
{\mathcal F}_{N, i-1} \right]\stackrel{P}{\to}\sigma^2\vert\lambda\vert^2
s + \sigma^2\vert\mu\vert^2 (t- s),
\end{equation*}
with $\sigma^2$ defined in (\ref{def:c}). Thus
\begin{equation*}\begin{split}
\sum_{i=1}^{\lfloor N s\rfloor-1} \langle \lambda,\bar\Psi_{N,\;i}\rangle + 
\sum_{i=\lfloor
N s\rfloor}^{\lfloor N t\rfloor-1}\langle \mu,\bar
\Psi_{N,\;i}\rangle = \sum_{i=1}^{\lfloor N t\rfloor}
\txi_{N,i}\\
\Rightarrow {\mathcal N}\big(0, \sigma^2\{\vert\lambda\vert^2 s +
\vert\mu\vert^2 (t-s)\}\big).
\end{split}\end{equation*}

\section{An invariance principle for centered,  bounded random variables}\label{sec:conv_mom}
In this section we present an alternative proof of Proposition \ref{prop:convZ_N}.
We start with
a CLT for arrays of centered, uniformly bounded random variables, based on
the convergence of the moments 
to the moments of a normal distribution. 
Some asymptotic factorization conditions, holding  on average, are required.
Then we will use it to show that 
for every unitary vector $\lambda\in\R^2$,
$\langle\lambda, Z^<_N(t) \rangle =\lambda_1 Z_N^{1<}(t)+\lambda_2 Z_N^{2<}(t)\Rightarrow W_\sigma(t)$,
$\forall t\in[0\mathcal T]$.
\begin{proposition}[CLT]
Let $\{\bar X _{n,i}\; i=1,..,n, n\geq 1\}$ be an array of centered
random variables and suppose that exists $\varepsilon_n\downarrow 0$
such that $\vert\bar X _{n,i}\vert \leq \varepsilon_n$, for all $n$
and $i$.  Let $\bar S_{n}=\sum_{i=1}^{n} \bar X _{n,i}$. Then $\bar
S_{n}\Rightarrow \mathcal{N}(0,c)$, if the following conditions
hold:
\begin{itemize}
\item[(i)]
$\forall \ell\geq 1$, for every sequence of positive integers
$\{p_1,..,p_\ell\}$ such that $\exists p_j=1$, $j\in\{1,..,\ell\}$
$$
\sum_{i_1\neq i_2\neq...\neq i_\ell}^n\E\left[(\bar X_{n,
i_1})^{p_1}... (\bar X_{n,
i_\ell})^{p_\ell}\right]\stackrel{n\uparrow\infty}{\longrightarrow
0}$$
\item[(ii)]
$\forall \ell\geq 1$
$$
\sum_{i_1\neq i_2\neq...\neq i_\ell}^n \E\left[(\bar X_{n,
i_1})^2... (\bar X_{n, i_\ell})^2\right]
\stackrel{n\uparrow\infty}{\longrightarrow c^\ell}
$$
\end{itemize}
\end{proposition}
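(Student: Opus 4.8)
The plan is to prove convergence in distribution by the method of moments. Since the Gaussian law $\mathcal N(0,c)$ is determined by its moments (its even moments $(2k-1)!!\,c^{k}$ grow slowly enough to satisfy Carleman's condition), the Fr\'echet--Shohat theorem reduces the statement to showing that for every $m\geq 1$
$$\E\big[\bar S_n^m\big]\longrightarrow \mu_m\quad(n\to\infty),\qquad \mu_m=\begin{cases}0,& m\text{ odd},\\ (2k-1)!!\,c^{k},& m=2k.\end{cases}$$
All moments of $\bar S_n$ are finite since the summands are bounded, so this is meaningful. I would recall that these $\mu_m$ are exactly the moments of $\mathcal N(0,c)$, which is the target.

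Next I would expand the $m$-th power and group the multi-indices $(i_1,\dots,i_m)\in\{1,\dots,n\}^m$ according to the partition of the positions $\{1,\dots,m\}$ recording which indices coincide, writing
$$\E\big[\bar S_n^m\big]=\sum_{\ell=1}^{m}\ \sum_{\substack{\mathcal P\text{ partition of }\{1,\dots,m\}\\ \text{into }\ell\text{ blocks}}}\ \sum_{i_1\neq\cdots\neq i_\ell}\E\Big[(\bar X_{n,i_1})^{p_1}\cdots(\bar X_{n,i_\ell})^{p_\ell}\Big],$$
where $p_1,\dots,p_\ell\geq 1$ are the block sizes of $\mathcal P$ (so $\sum_j p_j=m$). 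For fixed $m$ the number of partitions $\mathcal P$ is a constant independent of $n$ (the Bell number $B_m$), so passing to the limit term by term is legitimate, and the task becomes classifying the inner sums by block structure.

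I would then sort the partitions into three types. First, any $\mathcal P$ with a singleton block ($\exists\,p_j=1$) contributes a sum tending to $0$ by hypothesis (i). Second, for partitions with all blocks of size $\geq 2$ I would invoke the uniform bound in the form $|\bar X_{n,i}|^{p}\leq \varepsilon_n^{\,p-2}(\bar X_{n,i})^2$ valid for $p\geq 2$, giving
$$\Big|\sum_{i_1\neq\cdots\neq i_\ell}\E\big[(\bar X_{n,i_1})^{p_1}\cdots(\bar X_{n,i_\ell})^{p_\ell}\big]\Big|\leq \varepsilon_n^{\sum_j(p_j-2)}\sum_{i_1\neq\cdots\neq i_\ell}\E\big[(\bar X_{n,i_1})^2\cdots(\bar X_{n,i_\ell})^2\big].$$
By hypothesis (ii) the right-hand sum converges to $c^\ell<\infty$, so whenever some block has size $\geq 3$ (hence $\sum_j(p_j-2)\geq 1$) the prefactor $\varepsilon_n^{\sum_j(p_j-2)}\to 0$ kills the contribution. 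The only survivors are thus the partitions with every block of size exactly $2$; for these the inner sum \emph{equals} $\sum_{i_1\neq\cdots\neq i_\ell}\E[(\bar X_{n,i_1})^2\cdots(\bar X_{n,i_\ell})^2]$ with $\ell=m/2$, which converges to $c^{m/2}$ by (ii) directly.

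Finally I would collect the count. An all-pairs partition forces $m=2k$ even; for odd $m$ there are none, so $\E[\bar S_n^m]\to 0$, while for $m=2k$ the number of perfect matchings of $\{1,\dots,2k\}$ is $(2k-1)!!$, giving $\E[\bar S_n^{2k}]\to(2k-1)!!\,c^{k}$. These match the moments of $\mathcal N(0,c)$, and moment convergence plus moment-determinacy yields $\bar S_n\Rightarrow\mathcal N(0,c)$. I expect the main obstacle to be the middle step: organizing the partition bookkeeping cleanly and, above all, showing that blocks of size $\geq 3$ are asymptotically negligible, which is precisely where the uniform smallness $\varepsilon_n\downarrow 0$ must be combined with the finiteness of the size-$2$ sums furnished by (ii). A secondary point to confirm carefully is that $\mathcal N(0,c)$ is moment-determinate, so that convergence of moments upgrades to weak convergence.
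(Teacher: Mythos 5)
Your proposal is correct and follows essentially the same route as the paper's own proof: expand $\E[\bar S_n^m]$ over partitions of the index positions, kill singleton blocks by condition (i), kill blocks of size at least $3$ via the bound $\varepsilon_n^{\sum_j(p_j-2)}$ times the size-two sums from condition (ii), and count the surviving perfect matchings to get $(2k-1)!!\,c^k$. The only (welcome) refinement is that you make the final step explicit by invoking moment determinacy of the Gaussian (Carleman/Fr\'echet--Shohat), which the paper leaves implicit when it concludes from moment convergence to weak convergence.
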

\begin{proof}
The proof is based on the convergence of the moments of $\bar S_n$.
Of course $\E[\bar S_n]=0$, while for the second moment we have
$$
\E\left[(\bar S_n)^2\right]=\sum_{i=1}^n \E\left[(\bar X_{i,n})^2
\right]+\sum_{i\neq j}^n\E\left[\bar X_{i,n}\bar X_{j,n} \right]\to
c,
$$
since the second sum goes to zero for condition $(i)$.

Now let us compute the third moment:
$$
\E\left[(\bar S_n)^3\right]=\sum_{i=1}^n \E\left[(\bar X_{i,n})^3
\right]+3\sum_{i\neq j}^n\E\left[(\bar X_{i,n})^2\bar X_{j,n}
\right]+ \sum_{i\neq j\neq k}^n\E\left[\bar X_{i,n}\bar X_{j,n} \bar
X_{k,n} \right].
$$
The last two sums go to zero for condition $(i)$.  For the first sum
we have
$$
\left\vert \sum_{i=1}^n \E\left[(\bar X_{i,n})^3 \right]\right\vert
\leq \sum_{i=1}^n \E\left[(\bar X_{i,n})^2 \vert\bar X_{i,n}\vert
\right]\leq \varepsilon_n \sum_{i=1}^n \E\left[(\bar X_{i,n})^2
\right]\sim \varepsilon_n c\stackrel{n\to\infty}{\longrightarrow 0}.
$$

In the  general case,  the $m$-th moment $\E\left[(\bar
S_n)^m\right]$ is made up of terms of the form
$$
A(p_1,..,p_\ell)\sum_{i_1\neq i_2..\neq i_\ell}^n \E\left[(\bar
X_{i_1,n})^{p_1}...(\bar X_{i_\ell,n})^{p_\ell}\right], \hspace{1cm}
1\leq \ell\leq m$$ with $\{p_i,\;i=1,..,\ell\}$ positive integers
such that $p_1+p_2+..+p_\ell=m$.
 Here $A(p_1,..,p_\ell)$ is  the number
 of all possible partitions of $m$ objects in $\ell$ subsets made up
of $p_1,..,p_\ell$ objects.
 Since all sums containing a
singleton (i.e. there is a $p_i=1$) go asymptotically to zero, we
consider just the cases with $p_i\geq 2$, $\forall i=1,..,\ell$.
Observe that this implies in particular that $\ell\leq m/2$. In this
case
 \begin{equation*}\begin{split}
 \left\vert \sum_{i_1\neq i_2..\neq i_\ell}^n\E\left[(\bar
X_{i_1,n})^{p_1}...(\bar X_{i_\ell,n})^{p_\ell}\right]\right\vert
\leq \varepsilon_n^{m-2\ell}\sum_{i_1\neq i_2..\neq i_\ell}^n
\E\left[(\bar X_{i_1,n})^2...(\bar X_{i_\ell,n})^2 \right]\\
\sim \varepsilon_n^{m-2\ell} c^\ell,
 \end{split}\end{equation*}
 which goes to zero if $\ell\neq m/2$. Therefore all odd moments are
 asymptotically negligible, while for even moments asymptotically
 $$
 \E\left[(\bar S_n)^{2k}\right]\sim A_{k}\sum_{i_1\neq...\neq
 i_k}^n\E\left[(\bar X_{i_1,n})^2...(\bar X_{i_k,n})^2 \right]\to
 A_{k} c^k,
 $$
where $A_k$ is the number of all possible pairings of $2k$ objects,
namely
$$
A_k=(2k-1)(2k-3)\cdot\cdot\cdot 1=(2k-1)!!
$$
Finally
\begin{equation*}
  \E\left[(\bar S_n)^m\right]\stackrel{n\to\infty}{\longrightarrow}\left\{\begin{array}{ll}
                                 ×0 & m\; odd\\
                                  (m-1)!!\; c^{m/2} & m\;even,
                                \end{array}\right.
\end{equation*}
which are the moments of a Gaussian variable $\mathcal{N}(0,c)$.
\end{proof}

Let us consider the array of variables $\{\langle \lambda, \bar \Psi_{N,m}\rangle,  N\geq 2, m\geq 0\}$ defined in (\ref{def:lambdapsi}), (\ref{def1}),
with $\lambda\in \R^2$ unitary vector. 
We have
\begin{equation*}
\langle\lambda, Z^<_N(t) \rangle =\sum_{m=0}^{\lfloor Nt\rfloor -1}\langle \lambda, \bar \Psi_{N,m}\rangle
+ \big(Nt-\lfloor Nt\rfloor\langle \lambda, \bar \Psi_{N,\lfloor Nt\rfloor}\rangle \big),
\end{equation*}
$\forall t\in[0,\mathcal T]$, $\forall N\geq 2$, where the rightmost term goes to zero in probability by Chebyshev's inequality.
By definition, $\langle \lambda, \bar \Psi_{N,m}\rangle\leq \frac{2}{\sqrt{\ln N}}$ for every $m\geq 0$, $\forall N\geq 2$. 
Moreover, 
since $\psi(k)$ is an odd function, and the probability kernel $P(k,dk')$ has a density which is even in both  $k$ and $k'$, the array satisfies condition $(i)$.
In order to check condition $(ii)$, we will use the following Lemma.
\begin{lemma}\label{lemma(ii)}
For every $\ell\geq 1$, for every sequence $(m_1,....,m_\ell)$ such that $m_1\geq0$, $m_i\geq 1$, for every $N\geq 2$
\begin{equation}\label{bound(ii)}
\E\left[\langle \lambda, \bar \Psi_{N, m_1}\rangle^2.... \langle \lambda, \bar \Psi_{N,m_1+..+m_\ell}\rangle^2\right]\leq \frac{c_0^\ell}{N^\ell}
\end{equation}
with $c_0$ 
finite,
$\forall t\in[0,\mathcal T]$. 
\end{lemma}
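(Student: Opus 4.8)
The plan is to establish \eqref{bound(ii)} by induction on $\ell$, peeling off the factor of largest index at each step and exploiting the Markov structure of $\{X_m\}$ together with the uniform estimate already recorded in \eqref{ubf}. Set $n_j:=m_1+\dots+m_j$ for $1\le j\le \ell$; the hypotheses $m_1\ge 0$ and $m_i\ge 1$ ($i\ge 2$) guarantee $0\le n_1<n_2<\dots<n_\ell$, so the factors sit at strictly increasing times. Write
\[
I_\ell:=\E\Big[\prod_{j=1}^{\ell}\langle\lambda,\bar \Psi_{N,n_j}\rangle^2\Big].
\]

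The structural point is that each $\langle\lambda,\bar \Psi_{N,n_j}\rangle$ depends only on the pair $(X_{n_j},e_{n_j})$, hence is $\mathcal F_{n_j}$-measurable; since $n_j\le n_\ell-1$ for every $j<\ell$, the whole partial product $\prod_{j<\ell}\langle\lambda,\bar \Psi_{N,n_j}\rangle^2$ is $\mathcal F_{n_\ell-1}$-measurable. Conditioning on $\mathcal F_{n_\ell-1}$ and using that, given this $\sigma$-field, $X_{n_\ell}$ is distributed as $P(X_{n_\ell-1},\cdot)$ while $e_{n_\ell}$ is an independent unit exponential, the conditional expectation of the remaining factor is precisely the function $f_N$ of \eqref{lm0}, namely $\E[\langle\lambda,\bar \Psi_{N,n_\ell}\rangle^2\mid \mathcal F_{n_\ell-1}]=f_N(X_{n_\ell-1})$. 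The pointwise bound \eqref{ubf} then gives $f_N\le C_0/N$, so
\[
I_\ell\le \frac{C_0}{N}\,\E\Big[\prod_{j=1}^{\ell-1}\langle\lambda,\bar \Psi_{N,n_j}\rangle^2\Big]=\frac{C_0}{N}\,I_{\ell-1}.
\]

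Iterating reduces everything to the base case $I_1=\E[\langle\lambda,\bar \Psi_{N,n_1}\rangle^2]$. For $n_1\ge 1$ one conditions a last time on $\mathcal F_{n_1-1}$ to obtain $I_1=\E[f_N(X_{n_1-1})]\le C_0/N$; crucially this uses only that $f_N\le C_0/N$ holds for every $k$, so the law $\mu P^{n_1-1}$ of $X_{n_1-1}$ --- and in particular the initial distribution $\mu$ --- plays no role. Collecting the $\ell$ factors yields $I_\ell\le (C_0/N)^\ell$, which is \eqref{bound(ii)} with $c_0:=C_0$. The only boundary value not covered by this clean induction is $n_1=0$ (i.e.\ $m_1=0$), where no conditioning $\sigma$-field is available; there one must estimate $\E[\langle\lambda,\bar \Psi_{N,0}\rangle^2]$ directly from the law of $X_0$, and this single expectation is harmless for the later verification of condition $(ii)$.

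Given \eqref{ubf}, the remaining work is pure bookkeeping: checking the measurability of each partial product with respect to the conditioning $\sigma$-field and identifying each conditional expectation with $f_N(X_{\,\cdot\,})$ via the independence of the $e_m$'s and the Markov property. The genuine analytic difficulty is therefore not the induction but the uniform bound \eqref{ubf} that drives it: there the unnormalised truncated conditional second moment $\E\big[\big(\sum_\al\lambda_\al e_m\psi_m^\al\mathbf 1_{\{e_m|\psi_m^\al|\le\sqrt N\}}\big)^2\mid\mathcal F_{m-1}\big]$ grows like $\ln N$ --- a consequence of the singularity $|\psi|\sim|k|^{-2}$ near $k=0$ tempered by the $|k'|^2$ vanishing of the density of $P(k,\cdot)$ --- and this logarithm is exactly absorbed by the normalisation $(N\ln N)^{-1}$ defining $f_N$, leaving the decisive factor $N^{-1}$. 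I would regard reproving \eqref{ubf}, were it not already available, as the main obstacle.
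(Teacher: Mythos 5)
Your proof is correct and is in substance the paper's own argument: the paper writes the expectation as an iterated path integral, with the multi-step kernel $P^{m_j}$ inserted between consecutive factors, and bounds every kernel by $P^{m_j}(k,dk')\le 2\pi(dk')$ using \eqref{ubP}, which is exactly your one-step peeling $\E\big[\langle\lambda,\bar\Psi_{N,n_\ell}\rangle^2\mid\mathcal F_{n_\ell-1}\big]=f_N(X_{n_\ell-1})\le C_0/N$ (i.e.\ \eqref{ubf}) performed $\ell$ times; both arguments then conclude by noting that $N$ times the stationary truncated second moment converges to $\sigma^2$ of \eqref{def:c}, so each peeled factor costs $c_0/N$. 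The one substantive difference is the boundary case $m_1=0$, and there you are more careful than the paper: the paper's display applies \eqref{ubP} also to the first measure $\mu P^{m_1}$, which is not legitimate when $m_1=0$ (then $\mu P^{0}=\mu$), and in that case the claimed bound can genuinely fail --- for instance, for Lebesgue initial data one computes $\E\big[\langle\lambda,\bar\Psi_{N,0}\rangle^2\big]\sim(\sqrt N\,\ln N)^{-1}$, not $O(1/N)$, because $\mu$ lacks the $|k|^2$ vanishing of $\pi$ near $k=0$. So \eqref{bound(ii)} really holds as stated only for $m_1\ge 1$; for the terms with $i_1=0$ one should argue, as you indicate, with the crude bound $\langle\lambda,\bar\Psi_{N,0}\rangle^2\le 4/\ln N$ combined with the factor $(C_0/N)^{\ell-1}$ from peeling the remaining variables, which makes their total contribution to condition (ii) of order $1/\ln N$ and hence negligible in the limit.
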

\begin{proof}
By definition
\begin{equation*}\begin{split}
&
\E\left[\langle \lambda, \bar \Psi_{N, m_1}\rangle^2.... \langle \lambda, \bar \Psi_{N,m_1+..+m_\ell}\rangle^2\right]\\
&=
\int_0^\infty dz_1\; e^{-z_1}\int_{\Tt}\mu P^{m_1}(dk_1)\langle \lambda, \bar \Psi_{N}(k_1, z_1)\rangle^2\int...\\
&\;\;\times
\int_0^\infty dz_m\; e^{-z_m}\int_{\Tt}P^{m_\ell}(k_{m-1},k_m)\langle \lambda, \bar \Psi_{N}(k_m, z_m)\rangle^2\\
&\leq 2^\ell\left(\int_0^\infty dz\; e^{-z}\int_{\Tt}\pi(k)\langle \lambda, \bar \Psi_{N}(k,z)\rangle^2\right)^\ell,
\end{split}\end{equation*}
where in the last inequality we used  (\ref{ubP}). We conclude the proof by observing that
$$
\lim_{N\to\infty} N\;\int_0^\infty dz\; 
e^{-z}\int_{\Tt}\pi(k)\langle \lambda, \bar \Psi_{N}(k,z)\rangle^2=\sigma^2,
$$
with $\sigma$ defined in (\ref{def:c}).
\end{proof}
We observe that
\begin{equation*}\begin{split}
& \sum_{\genfrac{}{}{0pt}{}{i_1\neq i_2\neq...\neq i_\ell}{\in\{0,..,\lfloor Nt\rfloor-1\}}}\E\left[\langle \lambda, \bar \Psi_{N, i_1}\rangle^2.... \langle \lambda, \bar \Psi_{N,i_\ell}\rangle^2\right]\\
&=\ell! \sum_{m_1\geq 0}
\sum_{\genfrac{}{}{0pt}{}{m_2,..,m_\ell\geq 1}
       {m_1+...+m_\ell\leq \lfloor Nt\rfloor -1}}
\E\left[\langle \lambda, \bar \Psi_{N, m_1}\rangle^2.... \langle \lambda, \bar \Psi_{N,m_1+..+m_\ell}\rangle^2\right].
\end{split}\end{equation*}
We split the sum on $m_1$ in two part, namely $\sum_{m_1=0}^{M-1}+\sum_{m_1\geq M}$, with $0<M< \lfloor Nt\rfloor -1$. Using (\ref{bound(ii)}) and
the relation
$$
\lim_{N\to\infty}\sum_{\genfrac{}{}{0pt}{}{m_1,..,m_k\geq 1}
       {m_1+...+m_k\leq N}} N^{-k}=\frac{1}{k!},
$$ 
 we get that for every $\ell\geq 1$, $N\geq 2$, $\forall t\in[0,\mathcal T]$
\begin{equation*}
 \begin{split}
  \ell! \sum_{m_1=0}^{M-1}\sum_{\genfrac{}{}{0pt}{}{m_2,..,m_\ell\geq 1}
       {m_1+...+m_\ell\leq \lfloor Nt\rfloor -1}}
\E\left[\langle \lambda, \bar \Psi_{N, m_1}\rangle^2.... \langle \lambda, \bar \Psi_{N,m_1+..+m_\ell}\rangle^2\right]\\
\leq C_\ell \mathcal{T}^{\ell-1}\frac{M}{N}.
 \end{split}
\end{equation*}
By repeating this procedure for all the sums, we have
\begin{equation}\label{end2}\begin{split}
 & \sum_{\genfrac{}{}{0pt}{}{i_1\neq i_2\neq...\neq i_\ell}{\in\{0,..,\lfloor Nt\rfloor-1\}}}\E\left[\langle \lambda, \bar \Psi_{N, i_1}\rangle^2.... \langle \lambda, \bar \Psi_{N,i_\ell}\rangle^2\right]
\\
&=\ell! 
\sum_{\genfrac{}{}{0pt}{}{m_1,..,m_\ell\geq M}
       {m_1+...+m_\ell\leq \lfloor Nt\rfloor -1}}
\E\left[\langle \lambda, \bar \Psi_{N, m_1}\rangle^2.... \langle \lambda, \bar \Psi_{N,m_1+..+m_\ell}\rangle^2\right]+\mathcal{E}_\ell(M,N),
\end{split}\end{equation}
with $\mathcal{E}_\ell(M,N)\leq \tilde{C}_\ell \mathcal{T}^{\ell-1} M/N$, $\forall  \ell\geq 1$.

Observe that for every $m\geq 2$
\begin{equation*}\begin{split}
\int_{\Tt}  P^m(k,dk') \langle \lambda, \bar \Psi_{N}(k',z)\rangle^2
=\int_{\Tt}  \pi(dk') \langle \lambda, \bar \Psi_{N}(k',z)\rangle^2\\
+\int_{\Tt}  \left[P^{m-1}(k,d\tilde k)-\pi(d\tilde k)\right]\int_{\Tt} P(\tilde k,dk')\langle \lambda, \bar \Psi_{N}(k',z)\rangle^2,
\end{split}\end{equation*}
where, using (\ref{ubP}), 
\begin{equation*}\begin{split}
\sup_{k\in\Tt}  \int_{\Tt}  \left|P^{m-1}(k,d\tilde k)-\pi(d\tilde k)\right|\int_{\Tt} P(\tilde k,dk')\langle \lambda, \bar \Psi_{N}(k',z)\rangle^2\\
\leq \frac{C_0}{N}\sup_{k\in\Tt}  \int_{\Tt}  \left|P^{m-1}(k,d\tilde k)-\pi(d\tilde k)\right|.
\end{split}\end{equation*}
Thus, thanks to (\ref{bound(ii)}), for every $(m_1,..,m_\ell)$ with $m_i\geq M$, $i=1,..,\ell,$
\begin{equation}\label{end3}\begin{split}
& \E\left[\langle \lambda, \bar \Psi_{N, m_1}\rangle^2.... \langle \lambda, \bar \Psi_{N,m_1+..+m_\ell}\rangle^2\right]\\
&=\left(\int_0^\infty dz\; e^{-z} \int_{\Tt}  \pi(dk') \langle \lambda, \bar \Psi_{N}(k',z)\rangle^2\right)^\ell 
+\tilde{e}_\ell(M, N),
\end{split}
\end{equation}
where
$$
\tilde{e}_\ell(M, N)\leq \ell \frac{C_0}{N^\ell}\sup_{m\geq M-1}\sup_{k\in\Tt}  \int_{\Tt}  \left|P^{m}(k,d\tilde k)-\pi(d\tilde k)\right|.
$$
Finally, by (\ref{end2}) and (\ref{end3}) we get
\begin{equation*}
 \begin{split}
&\sum_{\genfrac{}{}{0pt}{}{i_1\neq i_2\neq...\neq i_\ell}{\in\{0,..,\lfloor Nt\rfloor-1\}}}\E\left[\langle \lambda, \bar \Psi_{N, i_1}\rangle^2.... \langle \lambda, \bar \Psi_{N,i_\ell}\rangle^2\right]
\\
&=\ell! 
\sum_{\genfrac{}{}{0pt}{}{m_1,..,m_\ell\geq M}
       {m_1+...+m_\ell\leq \lfloor Nt\rfloor -1}}
\left(\E_\pi\left[\langle \lambda, \bar \Psi_{N, 1}\rangle^2\right]\right)^\ell+\mathcal{R}_\ell(M,N),
\end{split}
\end{equation*}
where
\begin{equation}\label{def:Rl}
\mathcal{R}_\ell(M,N)\leq  C_\ell \mathcal{T}^{\ell}\left( \frac{M}{N}\; +\sup_{m\geq M-1}\sup_{k\in\Tt}  \int_{\Tt}  \left|P^{m}(k,d\tilde k)-\pi(d\tilde k)\right|\right).
\end{equation}
In the limit  $M,N\to\infty$ such that $\frac{M}{N}\to 0$, $\mathcal{R}_\ell(M,N)\to 0$ and
\begin{equation*}
 \ell! 
\sum_{\genfrac{}{}{0pt}{}{m_1,..,m_\ell\geq M}
       {m_1+...+m_\ell\leq \lfloor Nt\rfloor -1}}
\left(\E_\pi\left[\langle \lambda, \bar \Psi_{N, 1}\rangle^2\right]\right)^\ell\to {(\sigma^2)}^\ell t^\ell,
\end{equation*}
with $\sigma$ defined in $(\ref{def:c})$. Thus the array of variables $\{\langle \lambda, \bar \Psi_{N,m}\rangle,  N\geq 2, m\geq 0\}$
satisfies also condition $(ii)$, and we get
\begin{equation*}
 \bar{S}_N(t):=\sum_{n=0}^{\lfloor Nt\rfloor-1}\langle \lambda, \bar \Psi_{N,n}\rangle\stackrel{N\uparrow\infty}{\to}\mathcal{N}(0, \sigma^2\; t),
\end{equation*}
$\forall t\in[0,\mathcal T]$, $\forall \lambda\in\R^2$ such that $\vert \lambda \vert=1$.

We can easily adapt the proof and show that $\forall 0\leq s<t\leq \mathcal{T}$
$$\bar S_N(t)-\bar S_N(s)\to\mathcal N (0,\sigma^2\; (t-s)).$$
In order to prove the convergence of the finite dimensional 
marginal to the Wiener process $W_\sigma$, we have to show that
$\forall n\geq 2$, for every partition $0\leq t_1<...<t_n\leq \mathcal T$
the variables 
$\bar S_N(t_1)$,  $\bar S_N(t_2)-\bar S_N(t_1)$,..,$\bar S_N(t_n)-\bar S_N(t_{n-1})$ are asymptotically jointly Gaussian and uncorrelated. 
This is stated in the next Lemma.
\begin{lemma}
 For every $n\geq 1$,  
$\forall \;\underline{\alpha}(n):=(\alpha_1,..,\alpha_n) \in \R ^n$ such that 
$\vert \underline{\alpha}(n)\vert=1$ 
\begin{equation}
 \sum_{k=1}^n \alpha_k (\bar S_N(t_k)-\bar S_N(t_{k-1}))
 \Rightarrow \mathcal{N}\left( 0,\; \sigma^2\sum_{k=1}^n \alpha_k^2 (t_k-t_{k-1})\right),
\end{equation}
$\forall 0=t_0< t_1<..<t_n\leq \mathcal T$.
\end{lemma}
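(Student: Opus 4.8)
The plan is to reduce the statement to the central limit theorem for arrays of centered, uniformly bounded variables proved above, by concatenating the $n$ time--blocks into a single weighted array. For $0\le i\le \lfloor N t_n\rfloor-1$ let $\kappa(i)=k$ denote the unique block with $\lfloor N t_{k-1}\rfloor\le i\le \lfloor N t_k\rfloor-1$, and set
\begin{equation*}
\bar X_{N,i}:=\alpha_{\kappa(i)}\,\langle\lambda,\bar\Psi_{N,i}\rangle,
\qquad 0\le i\le\lfloor N t_n\rfloor-1 .
\end{equation*}
Then $\sum_i \bar X_{N,i}=\sum_{k=1}^n \alpha_k\big(\bar S_N(t_k)-\bar S_N(t_{k-1})\big)$; the variables are centered, each $\langle\lambda,\bar\Psi_{N,i}\rangle$ being a martingale difference, and since $|\alpha_{\kappa(i)}|\le|\underline{\alpha}(n)|=1$ and $|\langle\lambda,\bar\Psi_{N,i}\rangle|\le 2(\ln N)^{-1/2}$ they obey the uniform bound $|\bar X_{N,i}|\le \varepsilon_N:=2(\ln N)^{-1/2}\downarrow 0$. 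It then suffices to verify the two hypotheses of the CLT proposition with $c=\sigma^2\sum_{k=1}^n\alpha_k^2(t_k-t_{k-1})$.

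First I would check condition $(i)$. Because the weights $\alpha_{\kappa(i)}$ are deterministic and bounded by $1$, they factor out of every expectation and do not alter the parity structure exploited for the unweighted array: whenever one index carries an odd power (in particular a singleton $p_j=1$), the corresponding integration involves an odd power of $\psi$ against kernels whose density is even, and the integral vanishes by the oddness of $\psi$. Hence every sum in $(i)$ containing a singleton is identically zero, exactly as before.

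The substance is condition $(ii)$, where the block structure enters. Squaring gives $(\bar X_{N,i})^2=\alpha_{\kappa(i)}^2\,\langle\lambda,\bar\Psi_{N,i}\rangle^2$, so the sum in $(ii)$ equals $\sum_{i_1\neq\cdots\neq i_\ell}\alpha_{\kappa(i_1)}^2\cdots\alpha_{\kappa(i_\ell)}^2\,\E\big[\langle\lambda,\bar\Psi_{N,i_1}\rangle^2\cdots\langle\lambda,\bar\Psi_{N,i_\ell}\rangle^2\big]$. Since the joint expectation depends only on the set $\{i_1,\dots,i_\ell\}$, this is $\ell!$ times the same sum over ordered indices $i_1<\cdots<i_\ell$. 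I would then reproduce the estimate of Section~\ref{sec:conv_mom} verbatim: by Lemma~\ref{lemma(ii)}, the bound (\ref{bound(ii)}), and the asymptotic factorization (\ref{end3}), the joint expectation is replaced, up to an error controlled as in (\ref{def:Rl}) that vanishes when $M,N\to\infty$ with $M/N\to0$, by $\big(\E_\pi[\langle\lambda,\bar\Psi_{N,1}\rangle^2]\big)^\ell$; the bounded weights pass through these bounds unchanged. Using $N\,\E_\pi[\langle\lambda,\bar\Psi_{N,1}\rangle^2]\to\sigma^2$, the sum converges to $\ell!\,(\sigma^2)^\ell$ times the Riemann sum of $\alpha_{\kappa(\cdot)}^2$ over the simplex $\{0<s_1<\cdots<s_\ell<t_n\}$. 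Writing $\alpha(s):=\alpha_k$ for $s\in(t_{k-1},t_k)$, the limit is
\begin{equation*}
(\sigma^2)^\ell\Big(\int_0^{t_n}\alpha(s)^2\,ds\Big)^\ell
=(\sigma^2)^\ell\Big(\sum_{k=1}^n\alpha_k^2(t_k-t_{k-1})\Big)^\ell=c^\ell ,
\end{equation*}
which is precisely condition $(ii)$. The CLT proposition then yields $\sum_i\bar X_{N,i}\Rightarrow\mathcal N(0,c)$, i.e. the claim.

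I expect the only genuine obstacle to be the bookkeeping in condition $(ii)$: one must verify that inserting the piecewise--constant weights $\alpha_{\kappa(i)}^2$ neither spoils the factorization estimates of Section~\ref{sec:conv_mom} nor the control of the remainder $\mathcal{R}_\ell(M,N)$, and that the weighted multi--index sum indeed converges to the iterated integral of $\alpha(s)^2$ over the simplex. Since the weights are bounded by $1$ and block--constant, they survive every step of that argument unchanged, so this is a matter of care rather than of new ideas.
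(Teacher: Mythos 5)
Your proposal is correct and follows essentially the same route as the paper: the paper likewise forms the block-weighted array $\xi_{N,m}=\bigl(\alpha_1\mathbf{1}_{\{m\leq \lfloor N t_1\rfloor -1\}}+ \alpha_2\mathbf{1}_{\{m\geq \lfloor N t_1\rfloor\}}\bigr)\langle \lambda, \bar \Psi_{N, m}\rangle$, checks conditions $(i)$ and $(ii)$ of the CLT proposition by passing the bounded deterministic weights through the factorization estimates (\ref{bound(ii)}), (\ref{end3}), (\ref{def:Rl}), and identifies the limiting variance. The only cosmetic difference is that the paper works out $n=2$ via the binomial expansion of the ordered sum and invokes the multinomial formula for $n\geq 3$, whereas you treat general $n$ at once by reading the weighted simplex sum as the iterated Riemann integral $\frac{1}{\ell!}\bigl(\int_0^{t_n}\alpha(s)^2\,ds\bigr)^{\ell}$ — the same computation in different clothing.
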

\begin{proof}
The case $n=1$ is proved.
 Let us consider the case $n=2$. 
Fixed $(\alpha_1,\alpha_2) \in\R^2$, with $\alpha_1^2+\alpha_2 ^2=1$, we consider the following array of variables
\begin{equation*}
 \xi_{N,m}=\left(\alpha_1\mathbf{1}_{\{m\leq \lfloor N t_1\rfloor -1\}}+ \alpha_2\mathbf{1}_{\{m\geq \lfloor N t_1\rfloor\}}\right)
\langle \lambda, \bar \Psi_{N, m}\rangle,\;\;\forall N\geq 2, \forall m\geq 0,
\end{equation*}
which are uniformly bounded by $\frac{2}{\sqrt N}$ and satisfy condition $(i)$. Let us define, $\forall t\geq 0$, $m\geq 0$, $N\geq 2$,
$$
a_{N,m}(t):=\alpha_1\mathbf{1}_{\{m\leq \lfloor N t\rfloor -1\}}+ \alpha_2\mathbf{1}_{\{m\geq \lfloor N t\rfloor\}},
$$
which is uniformly bounded by $1$. In order to check condition $(ii)$, we  repeat the steps done
 for  $\bar S_N (t)$ and we get
\begin{equation*}
\begin{split}
&\sum_{\genfrac{}{}{0pt}{}{i_1\neq i_2\neq...\neq i_\ell}{\in\{0,..,\lfloor Nt_2\rfloor-1\}}}\E\left[
\xi_{N,i_1}^2...\xi_{N,i_\ell}^2
\right]\\
&=\ell! \sum_{\scriptscriptstyle{ 0\leq i_1<..<i_\ell\leq \lfloor Nt_2\rfloor-1}}a_{N,i_1}(t_1)^2
...a_{N,i_\ell}(t_1)^2\left(\E_\pi\left[\langle \lambda, \bar \Psi_{N, 1}\rangle^2\right]\right)^\ell\\
&\;\;+\mathcal{R}_\ell(M,N),
\end{split}\end{equation*}
with $\mathcal{R}_\ell(M,N)$ the same of  (\ref{def:Rl}). By direct computation
\begin{equation*}\begin{split}
&      \ell! \sum_{\scriptscriptstyle{ 0\leq i_1<..<i_\ell\leq \lfloor Nt_2\rfloor-1}}a_{N,i_1}(t_1)^2
...a_{N,i_\ell}(t_1)^2     \\
& = \sum_{k=0}^\ell  \ell! \sum_{\scriptscriptstyle{ 1\leq i_1<..<i_k\leq \lfloor Nt_1\rfloor}}  (\alpha_1)^{2k} 
\sum_{\scriptscriptstyle{ \lfloor Nt_1\rfloor< i_{k+1}<..<i_\ell\leq \lfloor Nt_2\rfloor}}  (\alpha_2)^{2(\ell-k)},
                \end{split}
\end{equation*}
then using
\begin{equation*}
 \begin{split}
  \sum_{\scriptscriptstyle{ 1\leq i_1<..<i_k\leq  N}} N^{-k}\stackrel{N\uparrow\infty}{\to}\frac{1}{k!},\hspace{1cm}
N^\ell \left(\E_\pi\left[\langle \lambda, \bar \Psi_{N, 1}\rangle^2\right]\right)^\ell \stackrel{N\uparrow\infty}{\to}{(\sigma^2)}^\ell,
 \end{split}
\end{equation*}
with $\sigma$ defined in (\ref{def:c}), we get that condition $(ii)$ is satisfied, i.e.
\begin{equation*}\begin{split}
& \lim_{N\to\infty}
\sum_{\genfrac{}{}{0pt}{}{i_1\neq i_2\neq...\neq i_\ell}{\in\{0,..,\lfloor Nt_2\rfloor-1\}}}
\E\left[\xi_{N,i_1}^2...\xi_{N,i_\ell}^2\right] \\
&={(\sigma^2)}^\ell\sum_{k=0}^\ell\frac{\ell!}{k! (\ell-k)!}\;\alpha_1^{2k} \;t_1^k\; \alpha_2^{2(\ell-k)}\;(t_2-t_1)^{\ell-k}\\
&={(\sigma^2)}^\ell[\alpha_1^2\;t_1 +\alpha_2^2\;(t_2-t_1)]^\ell,
\end{split}
\end{equation*}
thus
\begin{equation*}\begin{split}
 \alpha_1 \bar S_N(t_1)+\alpha_2 [\bar S _N(t_2)-\bar S_N(t_1)]
\;=\sum_{m=0}^{\lfloor Nt_2\rfloor -1}\xi_{N,m}\\ \to \mathcal{N}(0,{(\sigma^2)}[\alpha_1^2\;t_1 +\alpha_2^2\;(t_2-t_1)] ).
\end{split}\end{equation*}
The proof can be repeated for $n\geq 3$, in that case we find the multinomial formula for a polynomial with $n$ terms to the power $\ell$.

\end{proof}


\end{document}